\definecolor{rojo}{rgb}{1,0,0}
\definecolor{blanco}{rgb}{1,1,1}
\newcommand{\ra}{\rightarrow}
\newcommand{\bey}{\begin{eqnarray*}}
\newcommand{\eey}{\end{eqnarray*}}
\newcommand{\ba}{\begin{align}}
\newcommand{\ea}{\end{align}}
\newcommand{\bea}{\begin{align*}}
\newcommand{\eea}{\end{align*}}
\newcommand{\be}{\begin{equation}}
\newcommand{\ee}{\end{equation}}
\newcommand{\R}{\mathbb R}
\newcommand{\Z}{\mathbb Z}
\newcommand{\D}{\mathcal D}
\newcommand{\bc}{\begin{center}}
\newcommand{\ec}{\end{center}}
\newcommand{\intRn}{\int_{\R^n}}
\newcommand{\al}{\alpha}
\newcommand{\cz}{Calder\'on-Zygmund\,}
\newcommand{\de}{\delta}
\newtheorem{thm}{Theorem}[section]
\newtheorem{cor}[thm]{Corollary}
\newtheorem{lem}[thm]{Lemma}
\theoremstyle{definition}
\newtheorem{remk}[thm]{Remark}
\numberwithin{equation}{section}
\newcommand{\comment}[1]{\vskip.3cm
\fbox{%
\parbox{0.93\linewidth}{\footnotesize #1}}
\vskip.3cm}
\begin{document}

\title [Sharp Weighted Bounds]
{Sharp weighted bounds for fractional integral operators}

\author[M. T. Lacey]{Michael T.  Lacey}
\author[K. Moen]{Kabe Moen}
\author[C. P\'erez]{Carlos P\'erez}
\author[R. H. Torres]{Rodolfo H. Torres}

\address{Michael T. Lacey\\
Department of Mathematics, Georgia Institute of Technology, Atlanta GA 30332, USA.}
\email{lacey@math.gatech.edu }

\address{Kabe Moen\\
Department of Mathematics, University of Kansas, 405 Snow Hall 1460
Jayhawk Blvd, Lawrence, Kansas 66045-7523, USA.}
\email{moen@math.ku.edu}

\address{Carlos P\'erez\\
Departamento De An\'alisis Matem\'atico, Facultad de Matem\'aticas,
Universidad De Sevilla, 41080 Sevilla, Spain.}
\email{carlosperez@us.es}

\address{Rodolfo H. Torres\\
Department of Mathematics, University of Kansas, 405 Snow Hall 1460
Jayhawk Blvd, Lawrence, Kansas 66045-7523, USA.}
\email{torres@math.ku.edu}

\begin{abstract}
The relationship between the  operator norms of fractional integral
operators acting on weighted Lebesgue spaces and the constant of the weights is investigated. Sharp bounds
are obtained for both the fractional integral operators and the
associated fractional maximal functions.  As an application improved Sobolev inequalities are obtained.
Some of the techniques used include  a sharp off-diagonal  version
of the extrapolation theorem of Rubio de Francia and characterizations of two-weight norm inequalities.

\end{abstract}

\keywords{Maximal operators, fractional integrals, singular
integrals, weighted norm inequalities, extrapolation, sharp
bounds.}\subjclass[2000]{42B20, 42B25}

\maketitle

\section{Introduction} Recall that a non-negative locally integrable function, or weight, $w$ is said to belong to the $A_p$
class for $1<p<\infty$  if it satisfies the condition
$$[w]_{A_p}\equiv\sup_Q\left( \frac{1}{|Q|}\int_Q w(x) \ dx\right)\left(\frac{1}{|Q|}\int_Q w(x)^{1-p'} \ dx\right)^{p-1}<\infty,$$
where $p'$ is the dual exponent of $p$ defined by the equation
$1/p+1/p'=1$.  Muckenhoupt \cite{Mu} showed that the weights
satisfying the $A_p$ condition are exactly the weights for which the
Hardy-Littlewood maximal function
$$
Mf(x)=\sup_{Q\ni x} \frac{1}{|Q|}\int_Q |f(y)| \ dy
$$
is bounded on $L^p(w)$.  Hunt, Muckenhoupt, and Wheeden \cite{HMW} extended the weighted  theory to the study of the
Hilbert transform
$$
\mathcal H f(x)=p.v. \int_{\R} \frac{f(y)}{x-y}\, dy.
$$
They showed that the
$A_p$ condition also characterizes the $L^p(w)$ boundedness of this
operator. Coifman and Fefferman \cite{CF} extended the $A_p$
theory to general Calder\'on-Zygmund operators. For example, to
 operators that are bounded, say on $L^2(\R^n)$, and of the form %
$$
Tf(x)=p.v. \int_{\R^n} f(y)K(x,y)\, dy,
$$
where
$$
|\partial^\beta K(x,y)| \leq c |x-y|^{-n-|\beta|}.
$$

Bounds on the operators norms in terms of the $A_p$ constants of the weights have been investigated as well. Buckley \cite{Buck} showed that for $1<p<\infty$,  $M$
satisfies
\begin{equation}\label{sharpmaximal}
\|M\|_{L^p(w) \to L^p(w)} \leq c \, [w]_{A_p}^{1/(p-1)}
\end{equation}
and the exponent $1/(p-1)$ is the best possible.   A new  and rather
simple proof of both Muckenhoupt's and Buckley's results was
recently given by Lerner \cite{Ler}.
The weak-type bound also observed by Buckley \cite{Buck} is
\be \label{weakannonimous}
\|M\|_{L^p(w) \to L^{p,\infty}(w)} \leq c\,  [w]_{A_p}^{1/p}.
\ee

For singular integrals operators, however, only partial results are
known. The interest in sharp weighted norm for singular integral
operators is motivated in part by applications in partial
differential equations. We refer the reader to Astala, Iwaniec, and
Saksman \cite{AsIwSa}; and Petermichl and Volberg \cite{PetVol} for
such applications.    Petermichl \cite{Pet1},  \cite{Pet2} showed
that
\begin{equation}\label{sharpcz} \|T\|_{L^p(w) \to L^p(w)} \leq c \, [w]_{A_{p}}^{\max\{1,1/(p-1)\}},\end{equation}
where $T$ is either the Hilbert or  one of the Riesz transforms in $\R^n$,
$$
R_jf(x)=c_n\, p.v.  \int_{\R^n} \frac{x_j-y_j}{|x-y|^{n+1}} f(y)\, dy.
$$
Petermichl's results were obtained for $p=2$ using Bellman function
methods. The general case $p\neq2$ then follows by the sharp version
of the Rubio de Francia extrapolation theorem given by
Dragi\u{c}evi\'c, Grafakos, Pereyra, and Petermichl \cite{DGPP}. We
recall that the original proof of the extrapolation theorem was given by Rubio de Francia in
\cite{Rub} and it was not constructive. Garc\'ia-Cuerva then gave a constructive
proof that can be found in \cite[p.434]{GCRdF} and which has been used
to get the sharp version in \cite{DGPP}. It is important to remark that so far no proof of the $L^p$ version of Petermilch's result is know without invoking extrapolation.
These are the best known results so far and whether \eqref{sharpcz}
holds for general \cz operators is not known.

There are also other estimates for \cz operators involving weights
which have received attention over the years. In particular, there
is  the ``Muckenhoupt-Wheeden conjecture"
\be \label{mwconjecture} \|Tf\|_{L^{1,\infty}(w) }\leq c\, \|f
\|_{L^1(Mw)}, \ee
for arbitrary weight $w$, and the ``linear growth conjecture" for
$1<p<\infty$, \be \label{lgconjecture} \|T\|_{L^p(w)\ra
L^{p,\infty}(w)} \le c_p [w]_{A_p}. \ee Both these conjectures
remain very difficult open problems. Some progress has been recently
made by Lerner, Ombrosi and P\'erez \cite{LOP1}, \cite{LOP2}.

Motivated by all these estimates, we investigate in this article the sharp weighted bounds for fractional integral operators and the related maximal functions.

For $0<\al<n$, the fractional integral operator or Riesz
potential  $I_\alpha$ is defined by
$$I_\al f(x)=\intRn \frac{f(y)}{|x-y|^{n-\al}}dy, $$
while the related fractional maximal operator  $M_\alpha$ is given by
$$M_\al f(x)=\sup_{Q\ni x} \frac{1}{|Q|^{1-\al/n}}\int_Q |f(y)| \ dy.$$
These operators play an important role in analysis, particularly in
the study of differentiability or smoothness properties a functions.
See the books by Stein \cite{St} or Grafakos \cite{GrCF}  for the
basic properties of these operators.

Weighted inequalities for these operators and more general potential
operators have been studied in depth. See e.g.  the works of
Muckenhoupt and Wheeden \cite{MW}, Sawyer \cite{Saw11}, \cite{Saw2},
Gabidzashvili and Kokilashvili \cite{GK}, Sawyer and Wheeden \cite{SW}, and P\'erez
\cite{CP2}, \cite{CP3}.
Such estimates naturally appear in problems in partial differential
equations and quantum mechanics.

In \cite{MW}, the authors  characterized the weighted
strong-type inequality for fractional operators in terms of the so-called $ A_{p,q}$ condition.
For $1<p<n/\al$ and $q$  defined by $1/q=1/p-\al/n$, they showed that for all  $ f\geq 0$,
\begin{equation} \label{FrcInt}
\left(\intRn (wT_\al f )^q \ dx\right)^{1/q}\leq c\left(\intRn (w f)^p \ dx\right)^{1/p},
\end{equation}
where $T_\al=I_\al$ or $M_\al$,   if
and only if $w\in A_{p,q}$. That is,
$$ [w]_{A_{p,q}}\equiv \sup_Q\left(\frac{1}{|Q|}\int_Q w^q \ dx\right)\left(\frac{1}{|Q|}\int_Q w^{-p'}\ dx\right)^{q/p'}<\infty. $$

The connection between the $A_{p,q}$ constant $[w]_{A_{p,q}}$ and
the operator norms of these fractional operators is the main focus
of this article. We will obtain the analogous estimates of \eqref{sharpmaximal}, \eqref{weakannonimous},
\eqref{sharpcz}, \eqref{mwconjecture}, and \eqref{lgconjecture} in
the fractional integral case.

At a formal level, the case $\al =0$ corresponds to the \cz case where, as mentioned, some estimates have not been obtained yet. Though  for $\al>0$ one deals with positive operators,  the corresponding estimates still remain difficult to be proved and we need to use a set of tools different from the ones used in the \cz situation.

Our main result, Theorem \ref{fullrange} below, is the sharp bound
$$\|I_\al\|_{L^p(w^p)\ra L^q(w^q)}\leq c[w]_{A_{p,q}}^{(1-\frac{\al}{n})\max\{1,\frac{p'}{q}\}}.$$
This is the analogous estimate of \eqref{sharpcz} for fractional integral operators.\\

{\bf Acknowledgements}

First author's research supported in part by National Science Foundation under grant DMS 0456611. Second  and fourth authors' research supported in part by the
National Science Foundation under grant DMS 0800492. Third author's
research supported in part by the Spanish Ministry of Science under
research grant MTM2006-05622. Part of the research leading to the results presented in this article was conducted when C. P\'erez visited  the University of Kansas, Lawrence  during the academic year 2007-2008
 and in the spring of 2009.  Finally, the  authors are very grateful to the ``Centre de Recerca
Matem\`{a}tica"  for the invitation to participate in a special research
programme in Analysis, held in the spring of 2009 where this project was finished.

\section{Description of the main results}
We start by observing that to obtain sharp bounds  for the
strong-type inequalities for $I_\alpha$ it is enough to obtain sharp
bounds for the weak-type ones. This is due to Sawyer's deep results
on the characterization of two-weight norm inequalities for
$I_\alpha$.  In fact, he proved in \cite{Saw2} that for two positive
locally integrable function $v$ and $u$,   and $1<p\leq q <\infty$,
$$I_\alpha: L^p(v)\to L^q(u)$$
if and only if  $u$ and the function $\sigma= v^{1-p'}$ satisfy the
{\it (local) testing conditions}
$$[u,\sigma]_{S_{p,q}}\equiv \sup_Q \sigma(Q)^{-1/p} \|\chi_QI_\alpha(\chi_Q \sigma)\|_{L^q(u)}<\infty$$
and
$$[\sigma, u ]_{S_{q',p'}}\equiv \sup_Q u(Q)^{-1/q'} \|\chi_QI_\alpha(\chi_Q u)\|_{L^{p'}(\sigma)}<\infty.$$
Moreover, his proof shows that actually
\begin{equation}\label{strong2}
\|I_\alpha\|_{L^p(v)\to L^q(u)} \approx [u,\sigma]_{S_{p,q}} + [\sigma, u ]_{S_{q',p'}}.
\end{equation}
On the other hand in his characterization of the weak-type,
two-weight inequalities for $I_\alpha$, Sawyer \cite{Saw11} also
showed that
%
$$
\|I_\alpha\|_{L^p(v)\to L^{q,\infty}(u)} \approx [\sigma, u ]_{S_{q',p'}}.$$
%
Combining  \eqref{strong2} and \eqref{weak2} it follows that
\begin{equation}\label{strongweak2}
\|I_\alpha\|_{L^p(v)\to L^q(u)} \approx \|I_\alpha\|_{L^{q'}(u^{1-q'})\to L^{p',\infty}(v^{1-p'})} +\|I_\alpha\|_{L^p(v)\to L^{q,\infty}(u)}.
\end{equation}
If we now set $u=w^q$ and $v=w^p$, we finally obtain the one-weight
estimate
\begin{equation}\label{strongweak}
\|I_\alpha\|_{L^p(w^p)\to L^q(w^q)} \approx
\|I_\alpha\|_{L^{q'}(w^{-q'})\to L^{p',\infty}(w^{-p'})}
+\|I_\alpha\|_{L^p(w^p)\to L^{q,\infty}(w^q)}.
\end{equation}

We will obtain sharp bounds for the weak-type norms in the right hand side of \eqref{strongweak} in two different ways, each of which is of interest on its own. Our first approach is based on an off-diagonal extrapolation theorem by
Harboure, Mac\'ias, and Segovia \cite{HMS}.  A second one is based in yet another characterization of  two-weight norm inequalities for $I_\alpha$  in the case $p<q$,  in terms of certain {\it (global) testing condition} and which is due to  Gabidzashvili and Kokilashvili \cite{GK}.

We present now the extrapolation results.
The proof follows the original one, except that we carefully track  the dependence of the estimates in terms of the $A_{p,q}$ constants of the weights.
\begin{thm} \label{ExtThm} Suppose that $T$ is an operator defined on an appropriate class of functions, (e.g. $C_c^\infty$, or $\bigcup_p L^p(w^p)$).  Suppose further that  $p_0$ and $q_0$ are exponents with $1\leq p_0\leq q_0<\infty$, and such that
$$\|wTf \|_{L^{q_0}(\R^n)}\leq c[w]_{A_{p_0,q_0}}^\gamma \|wf\|_{L^{p_0}(\R^n)}$$
holds for all $w\in A_{p_0,q_0}$ and some $\gamma>0$.  Then,
 $$\|wTf \|_{L^{q}(\R^n)}\leq c[w]_{A_{p,q}}^{\gamma \max\{1,\frac{q_0}{p_0'}\frac{p'}{q'}\}} \|wf \|_{L^{p}(\R^n)}$$
holds for all $p$ and $q$ satisfying $1<p\leq q<\infty$ and
$$\frac{1}{p}-\frac{1}{q}=\frac{1}{p_0}-\frac{1}{q_0},$$
and all weight $w\in A_{p,q}$.
\end{thm}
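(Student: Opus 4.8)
The plan is to follow the Rubio de Francia / García-Cuerva constructive extrapolation scheme, but to track the exact power of $[w]_{A_{p,q}}$ that appears at each use of the Rubio de Francia iteration algorithm. The starting point is the observation that since $1/p-1/q = 1/p_0 - 1/q_0$, the relation between $p$ and $q$ is ``rigid'': fixing one determines the other. Given $w \in A_{p,q}$, write $\sigma = w^{-p'}$ and $u = w^q$ as usual, so the hypothesis for $(p_0,q_0)$ and the desired conclusion for $(p,q)$ are both statements about pairs $(u,\sigma)$ of measures. The classical idea is to linearize the $L^q(u)$ norm by duality: $\|wTf\|_{L^q} = \sup\{ \int (wTf) h : \|h\|_{L^{q'}} = 1, h\ge 0\}$, and then to build, via an appropriately normalized Rubio de Francia iteration operator $\mathcal{R}h = \sum_{k\ge 0} \frac{M^k h}{(2\|M\|)^k}$ (with $M$ the Hardy–Littlewood maximal operator acting on the right space, i.e.\ with the right measure/exponent so that $\|M\| \lesssim$ a power of $[w]_{A_{p,q}}$ coming from Buckley's theorem \eqref{sharpmaximal}), a majorant $H \ge h$ which (i) has comparable $L^{q'}$ norm, and (ii) has the property that $H\,w^{?}$ lies in the relevant $A_{p_0,q_0}$ class with a controlled constant. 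One then applies the hypothesis to this constructed weight and unwinds.

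The key steps, in order, are: (1) reduce the conclusion to a bilinear/dual statement and fix the normalization of the iteration operator so that, by \eqref{sharpmaximal}, $\mathcal{R}$ is bounded on the appropriate weighted $L^r$ space with norm $\le 2$ while $\mathcal{R}h$ satisfies an $A_1$-type estimate $M(\mathcal{R}h) \le 2\|M\|\,\mathcal{R}h$; (2) form the candidate weight $W$ out of $w$, $\mathcal{R}h$, and the exponents, and compute $[W]_{A_{p_0,q_0}}$ — this is where one bounds $[W]_{A_{p_0,q_0}}$ by a product of $[w]_{A_{p,q}}$ to some explicit power times the $A_1$-constant of the Rubio de Francia factor, which is $\le 2\|M\| \lesssim [w]_{A_{p,q}}^{\,\theta}$ for the appropriate $\theta$; (3) substitute into the hypothesis $\|WTf\|_{L^{q_0}} \le c[W]_{A_{p_0,q_0}}^{\gamma}\|Wf\|_{L^{p_0}}$ and apply Hölder to pass from $L^{p_0}$ back to $L^p$ on the right side (again controlling the $\mathcal{R}h$ factor with its $L^{q'}$ norm $\le 2\|h\|_{L^{q'}}$); (4) collect exponents: the total power of $[w]_{A_{p,q}}$ should come out to $\gamma$ times $1$ plus correction terms, and a short bookkeeping computation — using $1/p - 1/q = 1/p_0 - 1/q_0$ — must reduce the final exponent to $\gamma \max\{1, \frac{q_0}{p_0'}\frac{p'}{q'}\}$, with the two cases $p \le$ (resp.\ $\ge$) the ``conjugate-balanced'' value corresponding to whether one runs the iteration on the $h$-side or, by a dual argument, on the $f$-side.

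The main obstacle I expect is step (2) together with the case split in step (4): getting the power of $[w]_{A_{p,q}}$ in $[W]_{A_{p_0,q_0}}$ \emph{exactly right} rather than just finite. Naively iterating gives a finite but suboptimal exponent; the sharp constant $\frac{q_0}{p_0'}\frac{p'}{q'}$ emerges only if one chooses the normalization of $\mathcal{R}$ optimally and, crucially, decides \emph{which} variable to apply the Rubio de Francia algorithm to — applying it to $h$ (the $L^{q'}$ dual function) versus applying it to $f$ (after a preliminary duality) yields the two different expressions inside the $\max$, and one must run both arguments and take the better. A secondary technical point is that when $p_0 = 1$ the hypothesis is a weighted $L^1 \to L^{q_0}$ bound, so $\mathcal{R}$ must be set up on $L^{(q_0/q)'}(\dots)$ or handled by the standard $p_0=1$ variant of the iteration; this needs to be checked separately but follows the same template. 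Once the exponent arithmetic is arranged, the rest — boundedness of $\mathcal{R}$, the $A_1$ estimate, and the Hölder steps — is routine.
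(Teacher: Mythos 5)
Your plan is essentially the paper's proof of Theorem~\ref{ExtThm}: they apply the constructive Rubio de Francia lemma (Lemma~\ref{RubioLem}, due to Garc\'ia-Cuerva, with the sharp constant tracked as in \cite{DGPP}), split into the two cases $p>p_0$ and $p<p_0$, and the two branches of the $\max$ arise exactly as you anticipate --- from applying the iteration to the dual function on the range side versus applying it (after a preliminary duality for $\|fw\|_{L^p}$) on the domain side. One concrete correction to make when you flesh out step (1): the duality is not $\|wTf\|_{L^q}=\sup_h\int (wTf)h$ with $h\in L^{q'}$, but rather
$$\|Tf\|_{L^q(w^q)}^{q_0}=\int_{\R^n}|Tf|^{q_0}\,g\,w^q\,dx$$
for a non-negative $g\in L^{(q/q_0)'}(w^q)$ of unit norm, i.e.\ one dualizes $|Tf|^{q_0}$ in $L^{q/q_0}(w^q)$; only after majorizing $g\le G$ can one rewrite the integrand as $|Tf|^{q_0}\,W^{q_0}$ with $W=(Gw^q)^{1/q_0}$ and check $W\in A_{p_0,q_0}$ (equivalently $Gw^q\in A_{r_0}$ with $r_0=1+q_0/p_0'$), so that the $L^{q_0}$ hypothesis can actually be inserted. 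The remaining ingredients you list --- the normalization of $\mathcal R$ so that it has norm at most $2$, the $A_1$-type self-improvement of $\mathcal Rh$, the H\"older step to return to $L^p(w^p)$, and the $p_0=1$ variant --- are exactly how the paper's argument proceeds.
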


As a consequence we have the following weak extrapolation theorem
using an idea from Grafakos and Martell \cite{GM}.

\begin{cor}\label{ExtWeak} Suppose that for some $1\leq p_0\leq q_0<\infty$, an operator $T$ satisfies the weak-type $(p_0,q_0)$ inequality
$$ \|Tf\|_{L^{q_0,\infty}(w^{q_0})}\leq c [w]^\gamma_{A_{p_0,q_0}} \|wf\|_{L^{p_0}(\R^n)}$$
 for every $w\in A_{p_0,q_0}$ and some $\gamma>0$.  Then $T$ also satisfies the weak-type $(p,q)$ inequality,
$$ \|Tf\|_{L^{q,\infty}(w^{q})}\leq c[w]_{A_{p,q}}^{\gamma \max\{1,\frac{q_0}{p_0'}\frac{p'}{q}\}} \|wf\|_{L^{p}(\R^n)}$$
for all $1<p\leq q<\infty$ that satisfy
$$\frac{1}{p}-\frac{1}{q}=\frac{1}{p_0}-\frac{1}{q_0}$$
and all $w\in A_{p,q}$.
\end{cor}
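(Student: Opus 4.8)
The plan is to obtain Corollary~\ref{ExtWeak} directly from the strong-type extrapolation Theorem~\ref{ExtThm}, using the linearization of the weak norm due to Grafakos and Martell. The starting point is the elementary identity, valid for any $\sigma$-finite measure $\mu$ and any $0<q<\infty$,
\[
\|g\|_{L^{q,\infty}(\mu)}=\sup_{t>0}\big\|\,t\,\chi_{\{|g|>t\}}\,\big\|_{L^{q}(\mu)} .
\]
Taking $\mu=w^{q}\,dx$ and $g=Tf$, this shows that the weak-type $(p,q)$ inequality to be proved is equivalent to the family of estimates
\[
\big\|\,w\,\big(t\,\chi_{\{|Tf|>t\}}\big)\,\big\|_{L^{q}(\R^n)}\le c\,[w]_{A_{p,q}}^{\gamma\max\{1,\frac{q_0}{p_0'}\frac{p'}{q}\}}\,\|wf\|_{L^{p}(\R^n)},\qquad t>0,
\]
with $c$ required to be independent of $t$ (and of $f$ and $w$). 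So I would fix $t>0$ and prove this single estimate.

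Next I would set $g_t:=t\,\chi_{\{|Tf|>t\}}$ and check that the pair $(g_t,f)$ verifies the hypothesis of Theorem~\ref{ExtThm} at the exponents $(p_0,q_0)$ with the \emph{same} $\gamma$: for every $w\in A_{p_0,q_0}$,
\[
\|w\,g_t\|_{L^{q_0}(\R^n)}=t\,\big(w^{q_0}(\{|Tf|>t\})\big)^{1/q_0}\le\|Tf\|_{L^{q_0,\infty}(w^{q_0})}\le c\,[w]_{A_{p_0,q_0}}^{\gamma}\,\|wf\|_{L^{p_0}(\R^n)},
\]
the first inequality being just the definition of the weak-$L^{q_0}$ quasinorm and the second the hypothesis of the corollary. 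There is a minor admissibility point — one needs $g_t$ to belong to the class on which the extrapolation machinery operates — but this is harmless: for $f\in C_c^\infty$ and in the applications we have in mind the set $\{|Tf|>t\}$ has finite measure, so $g_t$ is bounded with compact support and hence lies in $\bigcup_p L^p(w^p)$ for every $A_{p,q}$ weight, the general case following by density.

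The one point that needs care — and I expect it to be the only real obstacle — is that the assignment $f\mapsto g_t$ is \emph{not} a sublinear operator, so one cannot literally invoke Theorem~\ref{ExtThm} for ``the operator $f\mapsto g_t$.'' This is not a genuine difficulty: the Rubio de Francia–type proof of Theorem~\ref{ExtThm} uses the hypothesis only as an inequality relating the two fixed functions occurring in it, with the weights varying, so it is really a statement about arbitrary pairs of nonnegative functions and applies verbatim to $(g_t,f)$ in place of $(|Tf|,|f|)$. Applying it in this form gives, for all $1<p\le q<\infty$ with $\tfrac1p-\tfrac1q=\tfrac1{p_0}-\tfrac1{q_0}$ and all $w\in A_{p,q}$,
\[
\|w\,g_t\|_{L^{q}(\R^n)}\le c\,[w]_{A_{p,q}}^{\gamma\max\{1,\frac{q_0}{p_0'}\frac{p'}{q}\}}\,\|wf\|_{L^{p}(\R^n)},
\]
with $c$ and the exponent independent of $t$. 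Since $\|w\,g_t\|_{L^{q}(\R^n)}=t\,(w^{q}(\{|Tf|>t\}))^{1/q}$, taking the supremum over $t>0$ and using the linearization identity once more delivers
\[
\|Tf\|_{L^{q,\infty}(w^{q})}\le c\,[w]_{A_{p,q}}^{\gamma\max\{1,\frac{q_0}{p_0'}\frac{p'}{q}\}}\,\|wf\|_{L^{p}(\R^n)},
\]
which is the assertion. All the quantitative work — in particular the sharp bookkeeping of the constant $[w]_{A_{p,q}}$ — is already contained in Theorem~\ref{ExtThm}; the corollary is obtained merely by noting that the weak norm is a supremum of weighted $L^{q}$ norms of the auxiliary functions $g_t$, each of which satisfies that theorem's hypothesis at $(p_0,q_0)$.
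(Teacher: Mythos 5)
Your proof is correct and follows essentially the same route as the paper: the paper also linearizes the weak norm via the auxiliary functions $T_\lambda f=\lambda\chi_{\{|Tf|>\lambda\}}$, verifies the $(p_0,q_0)$ hypothesis for these with constant uniform in $\lambda$, applies Theorem~\ref{ExtThm}, and takes the supremum over $\lambda$. The one place where you are slightly more careful than the paper is in noting explicitly that $f\mapsto T_\lambda f$ is not even sublinear and that the extrapolation argument really only uses the hypothesis as an inequality between fixed pairs of functions with the weight varying; the paper merely remarks that Theorem~\ref{ExtThm} does not require linearity.
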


We will use the above corollary to obtain sharp weak bounds in the whole range of exponents for $I_\alpha$. As already described, this leads to strong-type estimates too. Nevertheless, for a certain range of exponents the strong-type estimates can be obtained in a more direct way without relying on the difficult two-weight results. 

It is not obvious a priori what the analogous of \eqref{sharpcz} should be for $I_\alpha$. A possible guess is
\begin{equation}\label{guess1}
 \|w\,I_\al f\|_{ L^{q}(\mathbb{R}^{n})} \leq
c\,[w]_{A_{p,q}}^{ \max\{1,\frac{p'}{q} \} }\,
\|w\,f\|_{L^{p}(\mathbb{R}^{n})}.
\end{equation}
Note that formally, the estimate reduces to  \eqref{sharpcz} when
$\alpha=0$ suggesting it could be sharp. While it is possible to
obtain such estimate, simple examples indicate it is not the best one.
In fact, we will show in this article a direct proof of the
following estimate.

\begin{thm} \label{ShpFracInt} Let $1<p_0<n/\al$ and $q_0$ be defined by the
equations $1/q_0=1/p_0-\al/n$ and $q_0/p'_0=1-\alpha/n$, and  let $w\in A_{p_0,q_0}$.
Then,
\begin{equation}\label{starting1}
 \|wI_\al f \|_{L^{q_0}(\R^n)} \leq
c\,[w]_{A_{p_0,q_0}}\|wf\|_{L^{p_0}(\R^n)}.
\end{equation}

\end{thm}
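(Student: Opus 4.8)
The plan is to pass to a dyadic model operator, dualize, and estimate the resulting positive bilinear form by a stopping-time argument; the point is that the two hypotheses on $(p_0,q_0)$ are exactly what is needed to bring every exponent of $[w]_{A_{p_0,q_0}}$ down to one. First, by a standard dyadic discretization one has, for $f\ge 0$, the pointwise bound $I_\al f(x)\le c_{n,\al}\sum_{j=1}^{3^n}\A_\al^{\D_j}f(x)$, where for a dyadic grid $\D$ we write $\A_\al^{\D}f=\sum_{Q\in\D}|Q|^{\al/n}\langle f\rangle_Q\chi_Q$ and $\langle f\rangle_Q=|Q|^{-1}\int_Q f$. Hence it suffices to prove $\|w\A_\al^{\D}f\|_{L^{q_0}(\R^n)}\le c\,[w]_{A_{p_0,q_0}}\|wf\|_{L^{p_0}(\R^n)}$ for one grid $\D$.

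Fix $f\ge 0$, choose $h\ge 0$ with $\|h\|_{L^{q_0'}}=1$ realizing the norm, and set $g=wh$, $\sigma=w^{-p_0'}$, $u=w^{q_0}$. Writing $f=\sigma\tilde f$ and $g=u\tilde g$, one checks $\|wf\|_{L^{p_0}}=\|\tilde f\|_{L^{p_0}(\sigma)}$ and $\|h\|_{L^{q_0'}}=\|\tilde g\|_{L^{q_0'}(u)}=1$, and the quantity to bound equals the bilinear form
\[
\Lambda=\sum_{Q\in\D}|Q|^{1+\al/n}\,\langle\sigma\rangle_Q\langle u\rangle_Q\,\langle\tilde f\rangle_Q^{\sigma}\langle\tilde g\rangle_Q^{u},
\]
where $\langle\cdot\rangle_Q^{\mu}$ denotes $\mu$-average over $Q$. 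The relation $1/q_0=1/p_0-\al/n$ is the scaling that matches the pairing $L^{p_0}(w^{p_0})\times L^{q_0'}(w^{-q_0'})$ with the homogeneity $|Q|^{1+\al/n}$, i.e.\ it is the identity behind the unweighted bound $\A_\al^{\D}\colon L^{p_0}\to L^{q_0}$; and the second relation $q_0/p_0'=1-\al/n$, combined with the definition of $[w]_{A_{p_0,q_0}}$, gives
\[
\langle\sigma\rangle_Q\langle u\rangle_Q=\langle\sigma\rangle_Q^{\,1-q_0/p_0'}\bigl(\langle u\rangle_Q\langle\sigma\rangle_Q^{q_0/p_0'}\bigr)\le[w]_{A_{p_0,q_0}}\,\langle\sigma\rangle_Q^{\al/n},
\]
so that $|Q|^{1+\al/n}\langle\sigma\rangle_Q\langle u\rangle_Q\le[w]_{A_{p_0,q_0}}\,|Q|\,\sigma(Q)^{\al/n}$. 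This is the heart of the matter: for the special exponent pair the ``double average'' collapses into a single power of the constant, just as $\langle\sigma\rangle_Q\langle w\rangle_Q\le[w]_{A_2}$ does in the Calder\'on--Zygmund theory.

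It therefore remains to establish the purely bilinear estimate, with $c=c(n,\al,p_0)$,
\[
\sum_{Q\in\D}|Q|\,\sigma(Q)^{\al/n}\,\langle\tilde f\rangle_Q^{\sigma}\langle\tilde g\rangle_Q^{u}\le c\,\|\tilde f\|_{L^{p_0}(\sigma)}\|\tilde g\|_{L^{q_0'}(u)} .
\]
To this end introduce the principal cubes $\F$ of $\tilde f$ with respect to $\sigma$ and $\G$ of $\tilde g$ with respect to $u$ (at each step keep the maximal descendants on which the relevant average more than doubles); by the weak-type $(1,1)$ bounds for the dyadic maximal operators $M_\sigma^{\D}$ and $M_u^{\D}$, the families $\{\sigma(F)\}_{F\in\F}$ and $\{u(G)\}_{G\in\G}$ are Carleson with constant at most $2$, and $\langle\tilde f\rangle_Q^{\sigma}\le 2\langle\tilde f\rangle_{\pi_\F Q}^{\sigma}$, $\langle\tilde g\rangle_Q^{u}\le 2\langle\tilde g\rangle_{\pi_\G Q}^{u}$ for every $Q$. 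One groups the sum over pairs $(F,G)$, estimates $\sum_{\pi_\F Q=F,\ \pi_\G Q=G}|Q|\,\sigma(Q)^{\al/n}$ by Lebesgue-sparseness inside $F\cap G$, and uses the $A_{p_0,q_0}$ inequality once more on the smaller of $F,G$ — which is where $q_0/p_0'=1-\al/n$ enters a second time — to balance the remaining geometric factors against $\sigma(F)^{1/p_0}\langle\tilde f\rangle_F^{\sigma}$ and $u(G)^{1/q_0'}\langle\tilde g\rangle_G^{u}$ along each chain. Since $1/p_0+1/q_0'=1+\al/n\ge 1$, H\"older applies to the resulting double sum, and the Carleson embedding theorem at exponent $p_0$ for $\F$ and at exponent $q_0'$ for $\G$ yields $\bigl(\sum_{F}\sigma(F)(\langle\tilde f\rangle_F^{\sigma})^{p_0}\bigr)^{1/p_0}\lesssim\|\tilde f\|_{L^{p_0}(\sigma)}$ and its analogue for $\tilde g$, which finishes the proof.

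The main obstacle is this last step: carrying out the double stopping time and showing that the surviving powers of $|Q|$, $\sigma(Q)$ and $u(Q)$ reorganize — with a constant independent of the weights, and with the pair-multiplicity under control — into precisely $\sigma(F)^{1/p_0}u(G)^{1/q_0'}$. It is exactly here that the constraint $q_0/p_0'=1-\al/n$ is used; for a general admissible pair $(p,q)$ the same scheme produces a strictly larger power of $[w]_{A_{p,q}}$, which is why the remaining exponents are reached instead through the off-diagonal extrapolation Theorem~\ref{ExtThm} and Corollary~\ref{ExtWeak}. Everything else — the dyadic discretization, the substitution of weights, and the Carleson embedding — is standard.
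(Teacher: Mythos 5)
Your first reduction is correct and is exactly the algebraic heart of the paper's argument: using $q_0/p_0'=1-\al/n$ to write $\langle\sigma\rangle_Q\langle u\rangle_Q\leq[w]_{A_{p_0,q_0}}\langle\sigma\rangle_Q^{\al/n}$ is how the constant appears to the first power. However, the second half — the claimed ``purely bilinear estimate'' $\sum_{Q\in\D}|Q|\,\sigma(Q)^{\al/n}\langle\tilde f\rangle_Q^{\sigma}\langle\tilde g\rangle_Q^{u}\lesssim\|\tilde f\|_{L^{p_0}(\sigma)}\|\tilde g\|_{L^{q_0'}(u)}$ with constant $c(n,\al,p_0)$ — is not established, and there is an internal inconsistency in the sketch: you announce a weight-independent constant and then say you ``use the $A_{p_0,q_0}$ inequality once more.'' Either that second use is really of the pointwise identity $u^{1/q_0'}\sigma^{1/q_0}\equiv 1$ (which is a different fact, equivalent to $q_0/p_0'=1-\al/n$ and not involving $[w]_{A_{p_0,q_0}}$), or it reintroduces the constant and destroys linearity. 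Moreover, the double stopping-time/parallel-corona argument you gesture at is substantially nontrivial: after grouping by $(F,G)$, the inner geometric sum does contract (indeed $\sum_{Q\subset P}|Q|\sigma(Q)^{\al/n}\lesssim|P|\sigma(P)^{\al/n}$ by H\"older at each scale), but the resulting sum over pairs, with $\langle\tilde f\rangle_F^\sigma$ and $\langle\tilde g\rangle_G^u$ carried by different stopping families and $1/p_0+1/q_0'=1+\al/n>1$, does not close with a single application of H\"older and two Carleson embeddings as stated; this is precisely the place where a concrete mechanism is needed and the sketch stops.

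The paper orders the steps differently and thereby sidesteps this difficulty: it keeps the $|Q|^{\al/n}$ decay to reduce $\sum_{\D}$ to a Calder\'on--Zygmund family $\{Q_{k,j}\}$ built from the level sets of $M^d(gu)$; only then does it invoke $[w]_{A_{p_0,q_0}}$; it manufactures pairwise disjoint sets $E_{k,j}\subset Q_{k,j}$ of comparable Lebesgue measure, uses $1=u^{1/q_0'}\sigma^{1/q_0}$ (again the relation $q_0/p_0'=1-\al/n$) to convert $|E_{k,j}|$ into $u(E_{k,j})^{1/q_0'}\sigma(E_{k,j})^{1/q_0}$, applies H\"older at the \emph{conjugate} pair $(q_0,q_0')$ over the disjoint $E_{k,j}$, and finishes with Lemma~\ref{WeightFrac}, the measure-independent $L^{p_0}(\sigma)\to L^{q_0}(\sigma)$ bound for $M^c_{\al,\sigma}$ (and its $\al=0$ analogue for $M_u^c$), which absorbs the excess $\al/n$ in the exponents. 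What is missing from your proposal is exactly this combination — a sparse-type selection of cubes \emph{before} the weighted arithmetic, disjoint supporting sets, and the measure-independent fractional maximal bound; without it the remaining bilinear estimate is only asserted, not proved.
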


The pair $(p_0,q_0)$ in the above theorem could be seen as the replacement of the $L^2$ case when $\alpha=0$. That is, it yields a linear growth on the weight constant. However, unlike the case $\alpha=0$, one can check that starting from this point $(p_0,q_0)$, extrapolation and duality give sharp estimates for a reduced set of exponents. See \eqref{reducedset} below. To get the full range we use first Corollary \ref{ExtWeak} to obtain sharp estimates for the weak-type $(p,q)$ inequality for $I_\al$.  We
have the following result.

\begin{thm}\label{ShpWk} Suppose that $1\leq p<n/\al$ and that $q$ satisfies $1/q=1/p-\al/n$.  Then
\be \label{weakest} \|I_\al f\|_{L^{q,\infty}(w^q)}\leq
c\,[w]_{A_{p,q}}^{1-\frac{\al}{n}} \|w\, f\|_{L^p(\R^n)}.\ee
Furthermore, the exponent $1-\frac{\al}{n}$ is sharp.
\end{thm}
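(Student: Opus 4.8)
The plan is to obtain the weak-type bound \eqref{weakest} by applying the weak extrapolation result, Corollary \ref{ExtWeak}, starting from a single well-chosen endpoint pair, and then to prove sharpness by an explicit power-weight example. For the upper bound, the natural starting point is $p_0=1$: one establishes the weak-type $(1,q_0)$ inequality
$$\|I_\al f\|_{L^{q_0,\infty}(w^{q_0})}\leq c\,[w]_{A_{1,q_0}}^{1-\al/n}\,\|wf\|_{L^1(\R^n)},\qquad \tfrac1{q_0}=1-\tfrac\al n,$$
for all $w\in A_{1,q_0}$. Here the exponent $\gamma = 1-\al/n$ should be read off directly, and since at $p_0=1$ one has $p_0'=\infty$ so that $q_0/p_0'=0$, the factor $\max\{1,\tfrac{q_0}{p_0'}\tfrac{p'}{q}\}=1$ in Corollary \ref{ExtWeak}, and extrapolation propagates the exponent $1-\al/n$ without amplification to the whole range $1<p<n/\al$, $1/q=1/p-\al/n$. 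This is exactly the form of \eqref{weakest}. (The case $p=1$ itself is the endpoint input.) The main work is therefore the $(1,q_0)$ endpoint estimate: I would prove it by the standard Calder\'on-Zygmund / good-$\lambda$ approach adapted to $I_\al$, or more efficiently by comparing $I_\al$ pointwise to the dyadic fractional maximal-type sums and using the $A_{1,q_0}$ condition to control the level sets, carefully tracking that the weight constant enters only to the power $1-\al/n$. One clean route: use that $I_\al f \lesssim (M_\al f)^{1-\al/n}(Mf)^{\al/n}$ type pointwise estimates together with the known weak-$(1,q_0)$ bound for $M_\al$, or directly estimate $w^{q_0}(\{I_\al f>\lambda\})$ by decomposing into Whitney cubes of the level set of a dyadic model and summing.

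For the sharpness, I would test \eqref{weakest} on the power weight $w(x)=|x|^{-(n-\delta)/q}$ (equivalently $w^q(x)=|x|^{-(n-\delta)}$, so $w^q$ is locally integrable and $A_{p,q}$ precisely for small $\delta>0$), paired with $f(x)=|x|^{-\delta'-n/p}\chi_{|x|\le1}(x)$ for a suitable small $\delta'>0$. A direct computation gives $[w]_{A_{p,q}}\approx \delta^{-(q/p'+1)}$ or similar — the relevant point is $[w]_{A_{p,q}}\approx c\,\delta^{-\beta}$ for an explicit $\beta$ — while $\|wf\|_{L^p}\approx \delta'^{-1/p}$ and, estimating $I_\al f(x)\gtrsim |x|^{\al-n}\cdot(\text{mass of }f)$ for $|x|$ small, one computes $\|I_\al f\|_{L^{q,\infty}(w^q)}$ and finds it blows up like a power of $1/\delta$ as $\delta\to0$ that forces the exponent of $[w]_{A_{p,q}}$ in any valid inequality to be at least $1-\al/n$. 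Matching the powers of $\delta$ on the two sides is the calculation that pins down the exponent exactly; one must choose the relation between $\delta'$ and $\delta$ (e.g. $\delta'\approx\delta$) to make the example extremal.

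The main obstacle I anticipate is the endpoint $(1,q_0)$ weak-type estimate with the \emph{sharp} linear-in-$[w]_{A_{1,q_0}}^{1-\al/n}$ dependence: getting weak-$(1,1)$-type bounds for $I_\al$ with optimal constant tracking is more delicate than the strong bounds of Theorem \ref{ShpFracInt}, since one cannot simply dualize, and naive good-$\lambda$ arguments tend to lose the exact power. I would handle this by reducing to a dyadic positive operator (a sum over dyadic cubes of averages times $|Q|^{\al/n}$), for which the level-set estimate can be done by a direct Calder\'on-Zygmund stopping-time decomposition where the $A_{1,q_0}$ constant appears exactly once to the power $1-\al/n$ through the comparison $w^{q_0}(Q)\lesssim [w]_{A_{1,q_0}}^{q_0}\big(\inf_Q w\big)^{q_0}|Q|$ and the relation $q_0(1-\al/n)=1$. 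A secondary, more routine obstacle is verifying that Corollary \ref{ExtWeak} genuinely applies with $p_0=1$ (the corollary is stated for $1\le p_0$, so this is allowed) and that the resulting exponent $\gamma\max\{1,\tfrac{q_0}{p_0'}\tfrac{p'}{q}\}$ collapses to $1-\al/n$; and on the sharpness side, double-checking that the chosen power weight lies in $A_{p,q}$ with the claimed constant asymptotics.
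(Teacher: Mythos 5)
Your extrapolation framework is exactly the paper's first proof: start at $p_0=1$, $q_0=(n/\alpha)'$, feed the weak-$(1,q_0)$ endpoint into Corollary~\ref{ExtWeak} with $\gamma=1-\alpha/n$, and observe that $q_0/p_0'=0$ so the exponent does not amplify. The sharpness example is also in the same spirit as the paper's (power weights $|x|^{\delta-n}$, with $\delta\to0$). So the skeleton is right.

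The gap is in the endpoint $(1,q_0)$ estimate, which is the entire content of the proof once the extrapolation plan is in place, and which you leave as a sketch of several alternatives, none of which is quite right. The pointwise bound you propose, $I_\alpha f\lesssim(M_\alpha f)^{1-\alpha/n}(Mf)^{\alpha/n}$, is dimensionally inconsistent (the left side scales like length to the power $\alpha$, the right side like length to the power $\alpha(1-\alpha/n)$), so it cannot hold; the correct Hedberg-type interpolations involve either a global $L^p$ norm of $f$ or a \emph{second} order $\beta>\alpha$ of fractional maximal function, neither of which plugs directly into a weighted weak-$(1,q_0)$ bound with the sharp $[w]_{A_{1,q_0}}^{1-\alpha/n}$ dependence. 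The good-$\lambda$ and Whitney/Calder\'on–Zygmund routes you list are genuinely delicate here — as you yourself flag — and it is not at all clear they deliver the exact power. The observation that actually makes the endpoint trivial, and which you miss, is that $q_0=(n/\alpha)'>1$, so $\|\cdot\|_{L^{q_0,\infty}(u)}$ is equivalent to a norm and Minkowski's integral inequality applies to $I_\alpha f(x)=\int f(y)\,|x-y|^{\alpha-n}\,dy$. This gives
\begin{equation*}
\|I_\alpha f\|_{L^{q_0,\infty}(u)}\le c\int_{\mathbb R^n}|f(y)|\,\bigl\||\cdot-y|^{\alpha-n}\bigr\|_{L^{q_0,\infty}(u)}\,dy
\qquad\text{and}\qquad
\bigl\||\cdot-y|^{\alpha-n}\bigr\|_{L^{q_0,\infty}(u)}=c\,Mu(y)^{1/q_0},
\end{equation*}
so one obtains the Fefferman--Stein-type inequality $\|I_\alpha f\|_{L^{q_0,\infty}(u)}\le c\,\|f\|_{L^1((Mu)^{1/q_0})}$ for \emph{every} weight $u$. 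The $A_{1,q_0}$ hypothesis then enters only once, via $Mu\le[w]_{A_{1,q_0}}u$ with $u=w^{q_0}$, and $1/q_0=1-\alpha/n$ gives precisely the power $1-\alpha/n$. You should replace your proposed endpoint arguments with this one; without it, the proof as written does not close. (The paper also gives a second, extrapolation-free proof for $p>1$ using the Gabidzashvili--Kokilashvili global testing characterization and a reverse-doubling lemma for $A_{p,q}$ weights, which you do not touch; that route requires establishing $[w^q,w^p]_{\textup{Glo}(p,q)}\approx[w]_{A_{p,q}}^{1-\alpha/n}$.)
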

We will also present a second proof of Theorem \ref{ShpWk} for $p>1$ without using extrapolation.

\begin{remk}
Once again,  the estimate in the above weak-type result should be contrasted with the case  $\al=0$ and the  linear growth conjecture  for a \cz operator $T$. Namely,
$$
\|T\|_{L^p(w)\ra L^{p,\infty}(w)} \le c_p [w]_{A_p}.
$$
Such results have remained elusive so far. For the best available
result see \cite{LOP2}.

The extrapolation proof of Theorem \ref{ShpWk}  will also show that for any weight $u$ the weak-type inequality
$$\|I_\al f\|_{L^{(n/\al)', \infty}(u)}\leq c\, \|f\|_{L^1((Mu)^{1-\frac{\al}{n}})}$$
holds.  For $\al=0$ the analogous version of this inequality is the
Muckenhoupt-Wheeden conjecture
$$\|Tf\|_{L^{1,\infty}(w) }\leq c\, \|f \|_{L^1(Mw)},$$
which is an open problem.
\end{remk}

As a consequence of the weak-type estimate \eqref{weakest}  we obtain the sharp bounds indicated by examples.
\begin{thm} \label{fullrange} Let $1<p<n/\al$ and $q$ be defined by the equation $1/q=1/p-\al/n$, and  let $w\in A_{p,q}$. Then,
\be \label{shpconjagain} \|I_\al\|_{L^p(w^p) \to L^q(w^q)}\leq
c\,[w]_{A_{p,q}}^{(1-\frac{\al}{n})\max\{1,\frac{p'}{q}\}}. \ee
Furthermore this estimate is sharp.
\end{thm}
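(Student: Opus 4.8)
The plan is to combine the weak-type estimate from Theorem \ref{ShpWk} with the duality/testing machinery recorded in \eqref{strongweak}. Recall that \eqref{strongweak} asserts
\[
\|I_\al\|_{L^p(w^p)\to L^q(w^q)} \approx
\|I_\al\|_{L^{q'}(w^{-q'})\to L^{p',\infty}(w^{-p'})}
+\|I_\al\|_{L^p(w^p)\to L^{q,\infty}(w^q)},
\]
so it suffices to bound each of the two weak-type operator norms on the right by a constant multiple of $[w]_{A_{p,q}}^{(1-\al/n)\max\{1,p'/q\}}$. The second term is handled immediately by Theorem \ref{ShpWk}: since $1/q=1/p-\al/n$, that theorem gives $\|I_\al f\|_{L^{q,\infty}(w^q)}\le c\,[w]_{A_{p,q}}^{1-\al/n}\|wf\|_{L^p}$, and $1-\al/n\le (1-\al/n)\max\{1,p'/q\}$, so this term is under control.

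The first term requires applying Theorem \ref{ShpWk} to the \emph{dual} configuration. Set $\tilde p=q'$, $\tilde q=p'$, and $\tilde w=w^{-1}$; one checks that $1/\tilde q=1/\tilde p-\al/n$ follows from $1/q=1/p-\al/n$, and that the relevant weight is $\tilde w^{\tilde q}=w^{-p'}$ while $\tilde w^{\tilde p}=w^{-q'}$, matching the spaces $L^{q'}(w^{-q'})\to L^{p',\infty}(w^{-p'})$. Theorem \ref{ShpWk} then yields
\[
\|I_\al\|_{L^{q'}(w^{-q'})\to L^{p',\infty}(w^{-p'})}\le c\,[\tilde w]_{A_{\tilde p,\tilde q}}^{\,1-\al/n}
= c\,[w^{-1}]_{A_{q',p'}}^{\,1-\al/n}.
\]
The key algebraic point, which I would verify by unwinding the definition of the $A_{p,q}$ constant, is the duality relation $[w^{-1}]_{A_{q',p'}}=[w]_{A_{p,q}}^{\,p'/q}$. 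Indeed, $[w]_{A_{p,q}}=\sup_Q \langle w^q\rangle_Q \langle w^{-p'}\rangle_Q^{q/p'}$, and replacing $(w,p,q)$ by $(w^{-1},q',p')$ swaps the roles of the two averages and raises the whole quantity to the power $p'/q$. Combining, the first term is bounded by $c\,[w]_{A_{p,q}}^{(1-\al/n)p'/q}\le c\,[w]_{A_{p,q}}^{(1-\al/n)\max\{1,p'/q\}}$.

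Adding the two contributions gives \eqref{shpconjagain}. For the sharpness claim, I would exhibit power-weight examples $w(x)=|x|^{\beta}$ on $\R^n$ (with $\beta$ chosen near the critical exponent making $w\in A_{p,q}$) and test $I_\al$ against a bump or a truncated power function, computing both sides explicitly as $\beta$ approaches the endpoint so that $[w]_{A_{p,q}}\to\infty$; one then reads off that no exponent smaller than $(1-\al/n)\max\{1,p'/q\}$ can hold, treating the cases $p'\le q$ and $p'>q$ separately since the extremizing weight differs. The main obstacle is really bookkeeping rather than depth: one must be careful that the duality substitution into Theorem \ref{ShpWk} is legitimate (the hypothesis $1\le \tilde p<n/\al$ needs $q'<n/\al$, equivalently $q>(n/\al)'$, which follows from $p>1$), and that the $A_{p,q}$-duality identity is applied with the correct exponent; everything else is a direct assembly of results already established in the paper.
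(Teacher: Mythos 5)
Your proposal matches the paper's own proof essentially line for line: it combines the two-weight characterization encoded in \eqref{strongweak} with the sharp weak-type bound of Theorem \ref{ShpWk} applied to both the primal pair $(p,q,w)$ and the dual pair $(q',p',w^{-1})$, and then invokes the identity $[w^{-1}]_{A_{q',p'}}=[w]_{A_{p,q}}^{p'/q}$ (which is \eqref{property2}) together with $[w]_{A_{p,q}}\ge 1$ to collapse the two exponents into the max. Your explicit verification that the dual indices satisfy the hypotheses of Theorem \ref{ShpWk} and your sketch of sharpness via power weights (handled in the paper via $M_\alpha\lesssim I_\alpha$ and the examples of Section \ref{examples}, with duality for the case $p'/q<1$) are both in line with what the paper does.
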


Another consequence of   \eqref{weakest}  is  a Sobolev-type estimate.  We obtain this when we use the fact
that weak-type inequalities implies strong-type inequalities when a gradient
operator is involved.  We have
the following result based on the ideas of Long and Nie \cite{LN}.  See also Hajlasz \cite{Haj}.

\begin{thm} \label{weakstrong} Let $p\geq 1$ and let $w\in A_{p,q}$ with $q$ satisfying $1/p-1/q=1/n$. Then,  for any Lipschitz function $f$ with compact support,
\be\label{gradient}
 \left(\intRn (|f(x)|w(x))^q \ dx \right)^{1/q} \leq c\,[w]_{A_{p,q}}^{1/n'} \left(\intRn (|\nabla f(x)| w(x))^p \ dx \right)^{1/p}.
 \ee
\end{thm}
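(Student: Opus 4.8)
The plan is to deduce this Sobolev-type inequality from the weak-type bound \eqref{weakest} for $I_\al$ with $\al=1$ (so that the exponent $1-\al/n$ there becomes $1/n'$), upgrading the weak-type estimate to a strong-type one by a truncation argument of Maz'ya type, following the ideas of Long and Nie \cite{LN}. First I would reduce to the case $f\geq 0$, replacing $f$ by $|f|$ and using $|\nabla|f||\leq|\nabla f|$ a.e., and recall the classical pointwise inequality
$$|f(x)|\leq c_n\, I_1(|\nabla f|)(x),$$
which holds for every Lipschitz $f$ with compact support (it follows from the representation of $f$ as a Riesz potential of $\nabla f$).

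Next comes the truncation. For each $j\in\Z$ I would set
$$f_j(x)=\min\bigl(\max(f(x)-2^j,\,0),\;2^j\bigr),$$
which is again Lipschitz with compact support, equals $2^j$ on $\Omega_{j+1}:=\{x:f(x)>2^{j+1}\}$, and satisfies $|\nabla f_j|=|\nabla f|\,\chi_{E_j}$ a.e., where $E_j:=\{x:2^j<f(x)\leq 2^{j+1}\}$. Applying the pointwise inequality to $f_j$ gives $2^j\leq c_n\, I_1(|\nabla f|\chi_{E_j})(x)$ for $x\in\Omega_{j+1}$, so $\Omega_{j+1}\subseteq\{x:I_1(|\nabla f|\chi_{E_j})(x)>2^j/c_n\}$. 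Feeding this into \eqref{weakest} with $\al=1$ and $q$ given by $1/q=1/p-1/n$ yields
$$2^{j}\,w^q(\Omega_{j+1})^{1/q}\leq c\,[w]_{A_{p,q}}^{1/n'}\Bigl(\int_{E_j}(|\nabla f|\,w)^p\,dx\Bigr)^{1/p}.$$

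To conclude I would estimate $\intRn(|f|w)^q\,dx=\sum_{j\in\Z}\int_{E_j}(|f|w)^q\,dx\leq 2^q\sum_{j\in\Z}2^{jq}\,w^q(\Omega_j)$, and then, using the displayed inequality above with $j$ replaced by $j-1$, bound the right-hand side by $c\,[w]_{A_{p,q}}^{q/n'}\sum_{j\in\Z}a_j^{q/p}$ with $a_j:=\int_{E_{j-1}}(|\nabla f|w)^p\,dx$. Since $1/q=1/p-1/n$ forces $q\geq p$, the embedding of $\ell^{p/q}$ into $\ell^{1}$ gives $\sum_j a_j^{q/p}\leq\bigl(\sum_j a_j\bigr)^{q/p}=\bigl(\intRn(|\nabla f|w)^p\,dx\bigr)^{q/p}$ (the sets $E_{j-1}$ exhaust $\{f>0\}$, and $\nabla f=0$ a.e. on $\{f=0\}$), and taking $q$-th roots produces \eqref{gradient}. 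The one delicate point — and the main thing to check carefully — is the truncation bookkeeping: that each $f_j$ is admissible in the pointwise inequality, that $|\nabla f_j|=|\nabla f|\chi_{E_j}$ a.e., and that the dyadic level sets $\Omega_j$ and $E_j$ stay aligned so that the self-improving $\ell^{p/q}\hookrightarrow\ell^1$ summation applies; everything else is a direct substitution into \eqref{weakest}.
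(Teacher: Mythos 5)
Your argument is correct and is essentially the paper's own proof: both rest on the pointwise bound $|f|\leq c\,I_1(|\nabla f|)$, the weak-type estimate of Theorem \ref{ShpWk} with $\alpha=1$, the dyadic truncation method of Long and Nie, and the inequality $\sum_j a_j^{q/p}\leq(\sum_j a_j)^{q/p}$ valid because $q\geq p$. The only cosmetic difference is that the paper first packages the pointwise inequality and \eqref{weakest} into the single estimate $\|g\|_{L^{q,\infty}(w^q)}\leq c\,[w]_{A_{p,q}}^{1/n'}\|\,|\nabla g|\,w\|_{L^p}$ and applies it to each truncation $\tau_{2^k}|f|$, whereas you apply the pointwise bound to $f_j$ and then feed $|\nabla f|\chi_{E_j}$ directly into the weak-type bound for $I_1$; these are the same step written in a different order, and your bookkeeping of the level sets and of $|\nabla f_j|=|\nabla f|\chi_{E_j}$ matches the paper's.
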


\begin{remk} We note that this estimate is better than what the strong bound on
$I_1$ in Theorem~\ref{fullrange}  gives.  In fact, for $f$  sufficiently smooth and compactly
supported,  we have the estimate
$$|f(x)|\leq c I_1(|\nabla f|)(x).$$
Hence,  if we applied Theorem \ref{fullrange} we obtain the estimate
$$\|f w\|_{L^q}\leq c\,[w]_{A_{p,q}}^{ 1/n' \max\{1,p'/q\} }\|\nabla f w\|_{L^p}.$$
However, Theorem \ref{weakstrong} gives a better growth in terms of the weight, simply
$[w]_{A_{p,q}}^{1/n'}$. This is a better  growth in the range $1<p<\min(2n', n)$ (i.e. $p'/q>1$) where the  estimate  \eqref{shpconjagain} only  gives
$[w]_{A_{p,q}}^{p'/(qn')}$. Note also that \eqref{gradient} includes the case $p=1$,  which cannot be obtained using Theorem~\ref{fullrange}.
\end{remk}

We also find the sharp constant for $M_\al$ in the full range of exponents.
\begin{thm}\label{ShpFracMax}  Suppose $0\leq \al <n$, $1<p<n/\al$ and $q$ is defined by the relationship $1/q=1/p-\al/n$.  If $w\in A_{p,q}$,  then
\be \label{FMest} \|wM_\al f \|_{L^q} \leq c
[w]_{A_{p,q}}^{\frac{p'}{q}(1-\frac{\al}{n})}\|wf \|_{L^p}. \ee
Furthermore, the exponent $\frac{p'}{q}(1-\frac{\al}{n})$ is sharp.
\end{thm}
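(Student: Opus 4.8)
The plan is to prove the upper bound for $M_\al$ first, and then separately exhibit examples showing the exponent $\frac{p'}{q}(1-\frac{\al}{n})$ cannot be lowered. For the upper bound, the natural starting point is the pointwise/dyadic reduction: it suffices to estimate the dyadic fractional maximal operator $M_\al^\D f(x) = \sup_{Q\ni x, Q\in\D} |Q|^{\al/n-1}\int_Q |f|$ over a dyadic grid $\D$, since the full $M_\al$ is controlled by finitely many (depending only on $n$) translated dyadic copies. Writing $u = w^q$ and $\sigma = w^{-p'}$, I would perform the usual Calder\'on--Zygmund stopping-time decomposition: build the sparse family of ``principal cubes'' $\{Q_j\}$ where the average $|Q_j|^{\al/n-1}\int_{Q_j}|f|$ roughly doubles, with the pairwise-disjoint ``new'' parts $E_j\subset Q_j$ satisfying $|E_j|\ge \tfrac12 |Q_j|$. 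Then
\be
\|wM_\al f\|_{L^q}^q \le C\sum_j \Big(|Q_j|^{\al/n-1}\int_{Q_j}|f|\Big)^q u(Q_j).
\ee
Now I split off the $A_{p,q}$ constant: $|Q_j|^{\al/n-1}\int_{Q_j}|f| = |Q_j|^{\al/n}\big(\tfrac1{|Q_j|}\int_{Q_j}|f w|\,w^{-1}\big)$, and the factor $u(Q_j)^{q/q}$ combined with $\sigma(Q_j)^{q/p'}$ reconstructs $[w]_{A_{p,q}}$ to the power $1$ at the cost of one cube-average. After pulling out $[w]_{A_{p,q}}$ (to the first power) one is left, heuristically, with a sum $\sum_j \big(\sigma(Q_j)^{-1}\int_{Q_j}|fw|\sigma^{1/p'}\big)^q \sigma(Q_j)^{q/p}$ — wait, here the bookkeeping has to be done carefully — which should be recognized as (a discrete, sparse version of) $\|M^\sigma(fw\,\sigma^{-1/p'})\|_{L^q(\sigma)}$ or controlled by it via H\"older and the $L^{p/q'}$... the cleanest route is: after extracting $[w]_{A_{p,q}}^{1}$ wrongly gives exponent $1$, not $\frac{p'}{q}(1-\frac\al n)$, so the correct split is to extract only a fractional power of $[w]_{A_{p,q}}$ and absorb the rest into an auxiliary maximal function with respect to $\sigma$ that is bounded on $L^{(q/p)'}(\sigma)$ with a constant independent of the weight. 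This is exactly the device behind Sawyer-testing / the $A_\infty$-type arguments; the sparseness $|E_j|\ge\frac12|Q_j|$ converts the sum over $j$ into an integral, and the power of $[w]_{A_{p,q}}$ that survives is precisely $\frac{p'}{q}\,(1-\frac{\al}{n})$ once one optimizes the split. An alternative, and probably shorter, route for the upper bound is to invoke Theorem \ref{ExtThm}: it is classical (Muckenhoupt--Wheeden) that $M_\al$ is bounded $L^{p_0}(w^{p_0})\to L^{q_0}(w^{q_0})$ for a single convenient pair, and tracking constants in the off-diagonal two-weight testing characterization of $M_\al$ (which for $M_\al$ reduces to a \emph{single} easily-checked testing condition, unlike $I_\al$) gives the sharp linear dependence at one exponent, after which Theorem \ref{ExtThm} propagates it with the stated $\max\{1,p_0/q_0'\cdots\}$ factor; for $M_\al$ one checks the max is always realized by the term producing $\frac{p'}{q}(1-\frac\al n)$.

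For sharpness, I would use the standard power-weight examples on $\R^n$: take $w(x) = |x|^{\delta}$ for $\delta$ chosen (depending on a parameter $\ep\to0$) so that $w\in A_{p,q}$ with $[w]_{A_{p,q}}\sim \ep^{-1}$ as $\ep\to 0^+$ — concretely $\delta$ near the critical exponent $n/(p' q)\cdot(\dots)$ that makes $w^{-p'}$ barely locally integrable — and test against $f(x) = |x|^{-n/p+\ep'}\chi_{B(0,1)}(x)$ (or the dual-weight analogue), computing both sides explicitly. Matching the blow-up rates of $\|wM_\al f\|_{L^q}$ against $[w]_{A_{p,q}}^{\beta}\|wf\|_{L^p}$ forces $\beta\ge \frac{p'}{q}(1-\frac{\al}{n})$. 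These computations are routine power-counting integrals; the only care needed is to choose the two $\ep$-parameters in the right relative scale so that the $A_{p,q}$ constant and the operator-norm ratio blow up at comparable rates.

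The main obstacle I anticipate is the upper bound's bookkeeping: getting the \emph{exact} exponent $\frac{p'}{q}(1-\frac{\al}{n})$ rather than the cruder $\max\{1,\frac{p'}{q}\}(1-\frac\al n)$ that a naive application of the extrapolation machinery or a crude H\"older split would yield. This requires choosing the starting pair $(p_0,q_0)$ for extrapolation optimally (or, in the direct argument, splitting the power of $[w]_{A_{p,q}}$ optimally between the ``$u$-average'' and the ``$\sigma$-average'' and recognizing the leftover as a weight-uniformly-bounded operator), and then verifying that for $M_\al$ — in contrast to $I_\al$, where the second testing condition forces the larger $\max$ — the sharp growth really is the smaller exponent. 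The disjointness/sparseness of the stopping cubes is what makes this possible, so the heart of the matter is a careful sparse-domination estimate for $M_\al$ combined with the elementary boundedness of the $\sigma$-weighted maximal function with constant independent of $\sigma$.
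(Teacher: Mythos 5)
There is a genuine gap in the upper-bound argument, and you essentially flag it yourself: your stopping-time / sparse split, done ``naively,'' pulls out $[w]_{A_{p,q}}$ to the first power, not to the power $\tfrac{p'}{q}\bigl(1-\tfrac{\al}{n}\bigr)$. You assert that ``optimizing the split'' fixes this, but you never produce the inequality that actually does it. The missing step is the precise H\"older maneuver of Lerner's proof, which the paper adapts: writing $\sigma=w^{-p'}$, $u=w^q$, $r=1+q/p'$ (so that $r'=p'(1-\al/n)$), one estimates, for each cube $Q$,
\begin{equation*}
\frac{1}{|Q|^{1-\al/n}}\int_Q |f|\;dy
\;\le\; c\,[w]_{A_{p,q}}^{r'/q}\Bigl(\frac{|Q|}{u(Q)}\Bigr)^{r'/q}\,
\frac{1}{\sigma(3Q)^{1-\al/n}}\int_Q \frac{|f|}{\sigma}\,\sigma\,dy,
\end{equation*}
i.e.\ one raises the $\sigma$-average to the fractional power $1-\al/n$, recognizes it as $[w]_{A_{p,q}}^{r'/q}$ times $(|Q|/u(Q))^{r'/q}$, and then takes the supremum over cubes centered at $x$ to get the \emph{pointwise} bound
$M_\al^c f(x)\le c\,[w]_{A_{p,q}}^{r'/q}\,M_u^c\bigl\{M_{\al,\sigma}^c(f/\sigma)^{q/r'}u^{-1}\bigr\}(x)^{r'/q}$.
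Taking $L^q(u)$-norms and using that $M_u^c:L^{r'}(u)\to L^{r'}(u)$ and $M_{\al,\sigma}^c:L^p(\sigma)\to L^q(\sigma)$ with constants independent of $u,\sigma$ (Lemma~\ref{WeightFrac}) finishes the proof. Without this two-layer composition of weighted maximal functions, the sparse sum you wrote down does not close with the claimed exponent.

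Your proposed ``shorter alternative route'' via Theorem~\ref{ExtThm} also cannot work in the stated generality. That extrapolation theorem converts an exponent $\gamma$ at $(p_0,q_0)$ into $\gamma\max\{1,\tfrac{q_0}{p_0'}\tfrac{p'}{q}\}\ge\gamma$ at $(p,q)$; it never \emph{decreases} the exponent. But $\tfrac{p'}{q}(1-\tfrac{\al}{n})\to 0$ as $p\to n/\al$ (equivalently $q\to\infty$), so no single starting pair $(p_0,q_0)$ can extrapolate to the full range of the theorem. (This is precisely why $M_\al$ is handled directly in the paper, unlike $I_\al$, for which the extrapolation-plus-duality scheme via Theorems~\ref{ExtThm} and \ref{ShpWk} does cover the full range.) Your sharpness example via power weights is in the right direction and matches the paper's choice $w_\delta(x)=|x|^{(n-\delta)/p'}$, $f_\delta=|x|^{\delta-n}\chi_B$ up to reparameterization; that part is fine, but it needs the explicit computation of $[w_\delta]_{A_{p,q}}\approx\delta^{-q/p'}$ and of both norms to actually close.
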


Note one more time that formally replacing $\alpha=0$ the estimates clearly generalize the result in \cite{Buck}.

\begin{remk}
 We also note that there is a weak-type estimate for $M_\al$.  For $p\geq 1$ and $1/q=1/p-\al/n$, standard covering methods
give
\be \label{weakMximal}
\|M_\al\|_{ L^p(w^p) \to L^{q,\infty}(w^q)} \leq  c\,   [w]_{A_{p,q}}^{1/q}.
\ee
See for instance the book by Garcia-Cuerva
and Rubio de Francia \cite[pp. 387-393]{GCRdF}, for the estimate in the case $\al=0$.
\end{remk}

\begin{remk}
Continuing with the formal comparison with the case $\alpha=0$, it would be interesting  to know if the
analog of \eqref{strongweak} also holds for Calder\'on-Zygmund singular integrals. Namely,
\begin{equation}\label{strongweakT}
\|T\|_{L^p(w)\to L^p(w)} \approx \|T^*\|_{L^{p'}(w^{1-p'})\to L^{p',\infty}(w^{1-p'})} +\|T\|_{L^p(w)\to L^{p,\infty}(w)}.
\end{equation}
This estimate, if true, may be beyond reach with the current available techniques.
\end{remk}

The rest of the paper is organized as follows. We separate the proofs of the main results in different sections which are essentially independent of each other.
 In Section~\ref{extrapolation} we
collect some additional definitions  and the proof of the version of the extrapolation result  Theorem~ \ref{ExtThm}.
We repeat  the proof of such result from \cite{HMS}  for the convenience of the reader, but also  to show that the constant we need can
indeed be tracked through  the computations. A faithful reader familiar with the extrapolation result may skip the details, move directly to the following sections of the article, and come back later to Section~\ref{extrapolation} to verify our claims.
 Section~\ref{bounds}  contains the proof of
of Theorem \ref{ShpFracInt}. 
The proof of Corollary~\ref{ExtWeak} and the two proofs of  the weak-type result for $I_\alpha$, Theorem~\ref{ShpWk}, are in Section~\ref{weak-type}. The proof of Theorem~\ref{fullrange} as a corollary of Theorem~\ref{ShpWk} is in this section too.
The proof of  the result for the fractional maximal function, Theorem~\ref{ShpFracMax},  is presented in Section~\ref{maximalresults}.
In  Section~\ref{examples} we present the examples
for the sharpness in Theorems   \ref{ShpWk}, \ref{fullrange}, and \ref{ShpFracMax}.
Finally,  in Section~\ref{application} we present the proof of the application to Sobolev-type inequalities.

\section{Constants in the off-diagonal extrapolation theorem}\label{extrapolation}

 For a Lebesgue measurable set $E$, $|E|$ will denote
its Lebesgue measure and $w(E)=\int_E w(x) \ dx$ will denote its
weighted measure.  We will be working on weighted versions of the
classical $L^p$ spaces, $L^p(w)$, and also  on the weak-type ones, $L^{p,\infty}(w)$, defined in the usual way with the Lebesgue measure $dx$ replaced by the measure $w\,dx$.
Often, however, it will be convenient  to  viewed the weight  not as a measure but as a
multiplier. For example $f\in L^p(w^p)$ if
$$\|fw\|_{L^p} = \left(\intRn (|f(x)|w(x))^p \ dx\right)^{1/p}<\infty.$$
This is more convenient when dealing with the  $A_{p,q}$ condition already defined in the introduction.  Recall, that for
$1<p\leq q<\infty$, we say $w\in A_{p,q}$  if \be
[w]_{A_{p,q}}\equiv \sup_Q\left(\frac{1}{|Q|}\int_Q w^q \
dx\right)\left(\frac{1}{|Q|}\int_Q w^{-p'}\
dx\right)^{q/p'}<\infty.\label{Apq}
\ee
Also, for $1\leq q<\infty$ we
define the class $A_{1,q}$ to be the weights $w$ that satisfy, \be
\label{Aq1} \left(\frac{1}{|Q|}\int_Q w^q \ dx\right)\leq c \inf_Q
w^q.\ee Here $[w]_{A_{1,q}}$ will denote the smallest constant $c$
that satisfies \eqref{Aq1}.  Notice that $w\in A_{p,q}$ if and only
if $w^q\in A_{1+q/p'}$ with
\be \label{property1}[w]_{A_{p,q}}=[w^q]_{A_{1+q/p'}}.\ee
In particular,  $[w]_{A_{p,q}}\geq 1$. We also note for later use that
\be \label{property2} [w^{-1}]_{A_{q',p'}}=[w]_{A_{p,q}}^{p'/q}.\ee
The term cube will always refer to a cube $Q$ in $\R^n$ with sides parallel to the axis. A multiple $rQ$ of a cube is a cube with the same center of $Q$ and side-length $r$ times as large. By $\D$ we denote the collection of all dyadic cubes in $\R^n$. That is, the collection of all cubes with lower-felt corner $2^{-l}m$ and side-length $2^{-l}$ with $l\in \Z$ and $m\in\Z^n$. As usual, $B(x,r)$ will denote the Euclidean ball in $\R^n$ centered at the point $x$ and with radius $r$.

To prove Theorem \ref{ExtThm} we will need the sharp version of the
Rubio de Francia algorithm given by Garc\'ia-Cuerva.  The proof can
be found in the article \cite{DGPP}.
\begin{lem} \label{RubioLem} Suppose that $r>r_0$, $v\in A_r$, and $g$ is a non-negative function in $L^{(r/r_0)'}(v)$.  Then,  there exists a function $G$ such that
\begin{enumerate}
\item $G\geq g$,
\item $\|G\|_{L^{(r/r_0)'}(v)}\leq 2 \|g\|_{L^{(r/r_0)'}(v)}$,
\item $Gv\in A_{r_0}$ with $[Gv]_{A_{r_0}}\leq c\,[v]_{A_r}$.
\end{enumerate}
\end{lem}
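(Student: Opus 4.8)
This is García-Cuerva's constructive version of the Rubio de Francia algorithm, in the sharp form of \cite{DGPP}, and my plan is to reproduce that construction while carefully tracking the powers of the $A$-constants. Throughout put $s=(r/r_0)'$, so that $1<s<\infty$ and $s'=r/r_0$. The heart of the matter is to manufacture a positive, sublinear operator $S$ acting on $L^s(v)$ with three features: $(\mathrm i)$ $Sh\ge h$ pointwise for every $h\ge 0$; $(\mathrm{ii})$ $S$ is bounded on $L^s(v)$, with $\|S\|_{L^s(v)\to L^s(v)}\le c\,[v]_{A_r}^{\beta}$ for a suitable exponent $\beta=\beta(r,r_0)>0$; and $(\mathrm{iii})$ whenever a non-negative $\phi$ satisfies $S\phi\le C\phi$ one has $\phi v\in A_{r_0}$ with $[\phi v]_{A_{r_0}}\le c\,C^{1/\beta}$.

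Granting such an $S$, I would set $N:=\|S\|_{L^s(v)\to L^s(v)}$ and define
\[
G:=\sum_{k=0}^{\infty}\frac{S^k g}{(2N)^{k}}.
\]
Property (1) is immediate, since the $k=0$ term is $g$ and $S$ is non-negative. Property (2) follows from $\|S^k g\|_{L^s(v)}\le N^k\|g\|_{L^s(v)}$, which gives $\|G\|_{L^s(v)}\le\sum_{k\ge 0}2^{-k}\|g\|_{L^s(v)}=2\|g\|_{L^s(v)}$; in particular $G$ is finite a.e. and the series converges in $L^s(v)$. Finally, sublinearity and positive homogeneity of $S$, together with monotonicity, give $SG\le\sum_{k\ge 0}(2N)^{-k}S^{k+1}g=2N\sum_{k\ge 1}(2N)^{-k}S^k g\le 2N\,G$, so property $(\mathrm{iii})$ yields $Gv\in A_{r_0}$ with $[Gv]_{A_{r_0}}\le c\,(2N)^{1/\beta}\le c\,[v]_{A_r}$ by $(\mathrm{ii})$. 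Thus everything reduces to producing $S$ with $(\mathrm i)$--$(\mathrm{iii})$ and with $\beta$ chosen so that the two occurrences of $[v]_{A_r}$ cancel.

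The operator $S$ is a normalised composition of the Hardy--Littlewood maximal function with multiplication by powers of $v$; the role of the powers is to convert the weighted $L^s(v)$-bound in $(\mathrm{ii})$, after the change of unknown that absorbs them, into a bound for $M$ on $L^t(W)$ with $W$ a fixed power of $v$, which is then supplied by Buckley's sharp inequality $\|M\|_{L^t(W)\to L^t(W)}\le c\,[W]_{A_t}^{1/(t-1)}$ of \eqref{sharpmaximal} together with the sharp identities relating $[W]_{A_t}$ to $[v]_{A_r}$ of the kind recorded in \eqref{property1}--\eqref{property2}. In the model case $r_0=1$ everything is explicit: take $Sh=M(hv)/v$, so that $SG\le 2N\,G$ reads $M(Gv)\le 2N\,Gv$, i.e.\ $Gv\in A_1$ with $[Gv]_{A_1}\le 2N$; and since $\|Sh\|_{L^{r'}(v)}=\|M(hv)\|_{L^{r'}(v^{1-r'})}$ while $\|h\|_{L^{r'}(v)}=\|hv\|_{L^{r'}(v^{1-r'})}$, one gets $N=\|M\|_{L^{r'}(v^{1-r'})\to L^{r'}(v^{1-r'})}\le c\,[v^{1-r'}]_{A_{r'}}^{1/(r'-1)}=c\,[v]_{A_r}$, using the duality identity $[v^{1-r'}]_{A_{r'}}=[v]_{A_r}^{r'-1}$; here $\beta=1$. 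For $r_0>1$ one replaces $M(hv)/v$ by the corresponding expression adapted to the $A_{r_0}$ class, so that the fixed-point inequality $SG\le 2N\,G$ encodes the (two-sided) $A_{r_0}$ condition for $Gv$, via an $M$-bound on $L^{r_0}(Gv)$ and the weak-type estimate \eqref{weakannonimous}, and again reduces, after absorbing the powers of $v$, to \eqref{sharpmaximal} for a weight built from $v$.

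The genuinely delicate point --- and the reason this is the \emph{sharp} form rather than the classical one, where it suffices that $N<\infty$ --- is exactly this bookkeeping: the exponents built into $S$ must be tuned so that the single power of $[v]_{A_r}$ that Buckley's bound produces on $N$ is precisely reciprocal to the power with which $N$ re-enters the $A_{r_0}$-constant of $Gv$, and this tuning must be uniform over all admissible pairs with $1\le r_0<r$. This optimisation is the content of the argument in \cite{DGPP}, which I would follow; the remaining steps above are then routine. (The lemma is applied in the proof of Theorem~\ref{ExtThm} with $v$ an auxiliary weight built from $w$ and the extrapolation exponents.)
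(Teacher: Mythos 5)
The paper does not prove this lemma; it simply cites \cite{DGPP}, so there is no in-paper argument to compare against. Your skeleton (the Rubio de Francia series $G=\sum_k S^k g/(2N)^k$ with properties (i)--(iii) of $S$) is the right one, and your treatment of the case $r_0=1$ is complete and correct: with $Sh=M(hv)/v$ the fixed point gives $[Gv]_{A_1}\le 2N$, and Buckley plus $[v^{1-r'}]_{A_{r'}}=[v]_{A_r}^{r'-1}$ gives $N\le c[v]_{A_r}$.

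The gap is the case $r_0>1$, and your description of how to handle it is not just vague but points in a direction that does not work. First, the naive extension $Sh=M(hv)/v$ fails: with $s=(r/r_0)'=r/(r-r_0)$ one has $\|Sh\|_{L^s(v)}=\|M(hv)\|_{L^s(v^{1-s})}$, and $v^{1-s}\in A_s$ is equivalent to $v\in A_{r/r_0}$, a strictly \emph{stronger} hypothesis than $v\in A_r$ once $r_0>1$. Second, the mechanism you propose (``the fixed-point inequality $SG\le 2NG$ encodes the two-sided $A_{r_0}$ condition for $Gv$, via an $M$-bound on $L^{r_0}(Gv)$ and the weak-type estimate'') cannot be made to work as stated: a single sublinear fixed-point inequality of Rubio de Francia type produces an $A_1$ weight, not an $A_{r_0}$ weight, and ``an $M$-bound on $L^{r_0}(Gv)$'' is precisely the conclusion $Gv\in A_{r_0}$, so it cannot be an input to the construction. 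What is missing is the right change of variables. Set $a=(r_0-1)/(r-1)\in[0,1)$ and $Sh=M(hv^{1-a})/v^{1-a}$. Then $(1-a)s=r'$, so $\|Sh\|_{L^s(v)}=\|M(hv^{1-a})\|_{L^s(v^{1-r'})}$, and since $s\ge r'$ one has $v^{1-r'}\in A_{r'}\subset A_s$ with $[v^{1-r'}]_{A_s}\le[v]_{A_r}^{1/(r-1)}$; Buckley gives $N\le c[v]_{A_r}^{1/((r-1)(s-1))}$. The fixed point $SG\le 2NG$ now yields $Gv^{1-a}\in A_1$ with $[Gv^{1-a}]_{A_1}\le 2N$, not $Gv\in A_{r_0}$ directly. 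One then factors $Gv=(Gv^{1-a})\,v^{a}$, observes that $1+a(r-1)=r_0$ so Jensen gives $v^a\in A_{r_0}$ with $[v^a]_{A_{r_0}}\le[v]_{A_r}^a$, and uses the elementary product bound $[W_1W_2]_{A_{r_0}}\le[W_1]_{A_1}[W_2]_{A_{r_0}}$ to obtain $[Gv]_{A_{r_0}}\le 2N\,[v]_{A_r}^a\le c\,[v]_{A_r}^{\theta}$ with $\theta=\tfrac{1}{(r-1)(s-1)}+a=\tfrac{r-2r_0+r_0^2}{(r-1)r_0}\le 1$, and hence $\le c[v]_{A_r}$ since $[v]_{A_r}\ge1$. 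This factorization $Gv=(Gv^{1-a})v^a$ is the idea your write-up is missing; without it the Rubio de Francia fixed point cannot produce an $A_{r_0}$ weight for $r_0>1$.
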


\begin{proof}[Proof of Theorem \ref{ExtThm}]

 First suppose
$w\in A_{p,q}$ and $1\leq p_0<p$, which implies $q>q_0$.  Then, \bey
\left(\intRn |Tf|^{q} w^{q}\right)^{1/q} &=&\left(\intRn (|Tf|^{q_0})^{q/q_0} w^{q}\right)^{\frac{q_0}{q}\frac{1}{q_0}} \\
&=&\left(\intRn |Tf|^{q_0} g w^{q}\right)^{\frac{1}{q_0}}
 \eey for
some non-negative $g\in L^{(q/q_0)'}(w^q)$ with
$\|g\|_{L^{(q/q_0)'}(w^q)}=1$.  Now, let $r=1+q/p'$ and
$r_0=1+q_0/p_0'$. Since $p>p_0$ we have $r>r_0$. Furthermore,  by the
relationship
$$\frac{1}{p}-\frac{1}{q}=\frac{1}{p_0}-\frac{1}{q_0},$$
 we have $q/q_0=r/r_0$.  Hence by Lemma \ref{RubioLem} and using that $w^q\in A_r$, there exists $G$ with $G\geq g$, $\|G\|_{L^{(r/r_0)'}(w^q)}\leq 2$, $Gw^q\in A_{r_0}$, and  $[Gw^q]_{A_{r_0}}\leq c\,[w^q]_{A_r}=c\,[w]_{A_{p,q}}$.  Also, since  $Gw^q\in A_{r_0}$ then $(Gw^q)^{1/q_0}\in A_{p_0,q_0}$ since,
\bey
[(Gw^q)^{1/q_0}]_{A_{p_0,q_0}}&=&\sup_Q \left(\frac{1}{|Q|}\int_Q (G^{1/q_0}w^{q/q_0})^{q_0} \right)\left(\frac{1}{|Q|}\int_Q (G^{1/q_0}w^{q/q_0})^{-p_0'} \right)^{q_0/p_0'} \\
&=&  \sup_Q \left(\frac{1}{|Q|}\int_Q Gw^q \right)\left(\frac{1}{|Q|}\int_Q (Gw^q)^{-p_0'/q_0} \right)^{q_0/p_0'} \\
&=&[Gw^q]_{A_{r_0}}.
 \eey
 Then,  we can proceed with \bey
\left(\intRn |Tf|^{q} w^{q}\right)^{1/q} &=& \left(\intRn |Tf|^{q_0} g w^{q}\right)^{\frac{1}{q_0}}\\
&\leq&\left(\intRn |Tf|^{q_0} G w^{q}\right)^{\frac{1}{q_0}}\\
&=& \left(\intRn |Tf|^{q_0} (G^{1/q_0} w^{q/q_0})^{q_0}\right)^{\frac{1}{q_0}}\\
&\leq &c\,[G^{1/q_0} w^{q/q_0}]_{A_{p_0,q_0}}^{\gamma} \left(\intRn |f|^{p_0} (G^{1/q_0} w^{q/q_0})^{p_0}\right)^{\frac{1}{p_0}} \\
&= &c\,[Gw^q]_{A_{r_0}}^{\gamma}  \left(\intRn |f|^{p_0}w^{p_0} G^{p_0/q_0} w^{q/(p/p_0)' }\right)^{\frac{1}{p_0}} \\
&\leq &c\,[w]_{A_{p,q}}^{\gamma}  \left(\intRn |f|^{p}w^{p} \right)^{1/p} \left(\intRn G^{(r/r_0)'} w^q \right)^{(p-p_0)/pp_0}\\
&\leq &c\,[w]_{A_{p,q}}^{\gamma}  \left(\intRn |f|^{p}w^{p} \right)^{1/p},
 \eey
where we have used the relationship
$$\frac{1}{p}-\frac{1}{q}=\frac{1}{p_0}-\frac{1}{q_0}.$$
For the case $1<p<p_0$, and hence $q<q_0$, notice that we can write
$$\left(\intRn |f|^{p}w^{p} \right)^{1/p}=\left(\intRn (|fw^{p'}|^{p_0})^{p/p_0}w^{-p'} \right)^{1/p}.$$
Since $p/p_0<1$,  there exists a function $g\geq 0$ satisfying
$$\intRn g^{p/(p-p_0)}w^{-p'}=1$$
such that
$$\left(\intRn |f|^{p}w^{p} \right)^{1/p}=\left(\intRn |fw^{p'}|^{p_0}gw^{-p'} \right)^{1/p_0},$$
see \cite[p. 335]{GrMF}.  Let $h=g^{-p_0'/p_0}$, $r=1+p'/q$ and
$r_0=1+p_0'/q_0$, so that $r>r_0$.  Notice that
$$\frac{1}{p}-\frac1q=\frac{1}{p_0}-\frac{1}{q_0}$$
implies $r/r_0=p'/p_0'$, which in turn yields
\begin{equation}
\label{Algebra} \frac{p_0'}{p_0}\left(\frac{r}{r_0}\right)'
=\frac{p}{p_0 -p}. \end{equation} Hence,
$$\intRn h^{(r/r_0)'}w^{-p'}=\intRn g^{p/(p-p_0)} w^{-p'}=1.$$
Observe that $w^{-p'}\in A_r$, so by Lemma \ref{RubioLem} we obtain
a function $H$ such that $H\geq h$,
$\|H\|_{L^{(r/r_0)'}(w^{-p'})}\leq 2$, and $Hw^{-p'}\in A_{r_0}$
with $[Hw^{-p'}]_{A_{r_0}}\leq
c\,[w^{-p'}]_{A_r}=c\,[w]_{A_{p,q}}^{p'/q}$ .  Now, for $Hw^{-p'}\in
A_{r_0} $ we claim that $(Hw^{-p'})^{-1/p_0'}\in A_{p_0,q_0}$ with
$[(Hw^{-p'})^{-1/p_0'}]_{A_{p_0,q_0}}=[Hw^{p'}]_{A_{r_0}}^{q_0/p_0'}$.
Indeed,
 \bey
[(Hw^{-p'})^{-1/p_0'}]_{A_{p_0,q_0}}&=&\sup_Q \left(\frac{1}{|Q|}\int_Q (H^{-1/p_0'}w^{p'/p_0'})^{q_0} \right) \!\! \left(\frac{1}{|Q|}\int_Q (H^{-1/p_0'}w^{p'/p_0'})^{-p_0'} \right)^{q_0/p_0'} \\
&=&\sup_Q \left(\frac{1}{|Q|}\int_Q (Hw^{-p'})^{-q_0/p_0'} \right) \!\! \left(\frac{1}{|Q|}\int_Q Hw^{-p'} \right)^{q_0/p_0'}\\
&=& [Hw^{-p'}]_{A_{r_0}}^{q_0/p_0'}.
\eey
Finally expressing $g$ in
terms of $h$ and using \eqref{Algebra}, working backwards we have
\bey
\left(\intRn |f|^{p}w^{p} \right)^{1/p} &= & \left(\intRn |f|^{p_0}h^{-p_0/p_0'}w^{p'(p_0-1)} \right)^{1/p_0}\\
&\geq & \left(\intRn |f|^{p_0}H^{-p_0/p_0'}w^{p'(p_0-1)} \right)^{1/p_0}\\
&=& \frac{[(Hw^{-p'})^{-1/p_0'}]_{A_{p_0,q_0}}^{\gamma} }{[(Hw^{-p'})^{-1/p_0'}]_{A_{p_0,q_0}}^{\gamma} }\left(\intRn |f|^{p_0}(H^{-1/p_0'}w^{p'/p_0'})^{p_0} \right)^{1/p_0}\\
&\geq &\frac{c}{[(Hw^{-p'})^{-1/p_0'}]_{A_{p_0,q_0}}^{\gamma} }\left(\intRn |Tf|^{q_0}(H^{-1/p_0'}w^{p'/p_0'})^{q_0} \right)^{1/q_0}\\
&\geq & \frac{c}{[(Hw^{-p'})^{-1/p_0'}]_{A_{p_0,q_0}}^{\gamma} }\left(\intRn |Tf|^{q}w^q\right)^{1/q}\left(\intRn H^{(r/r_0)'}w^{p'} \right)^{q-q_0/qq_0}\\
&\geq & \frac{c}{[(Hw^{-p'})^{-1/p_0'}]_{A_{p_0,q_0}}^{\gamma} }\left(\intRn
|Tf|^{q}w^q\right)^{1/q}.
 \eey
  In the second to last inequality we have used H\"older's inequality for
exponents less than one, i.e., if $0<s<1$ then
$$\|fg\|_{L^1}\geq \|f\|_{L^s}\|g\|_{L^{s'}},$$
where as usual $s'=s/(s-1)$. See \cite[p. 10]{GrCF} for  more details. Thus we have shown,
$$\left(\intRn |Tf|^{q}w^q\right)^{1/q}\leq c\,[(Hw^{-p'})^{-1/p_0'}]_{A_{p_0,q_0}}^{\gamma} \left(\intRn |f|^{p}w^{p} \right)^{1/p}.$$
From here we have
$$\|T\|\leq c\,[(Hw^{-p'})^{-1/p_0'}]_{A_{p_0,q_0}}^{\gamma} =c\,[Hw^{-p'}]_{A_{r_0}}^{{\gamma} \frac{q_0}{p_0'}}\leq c\,[w^{-p'}]_{A_{1+p'/q}}^{{\gamma}\frac{q_0}{p_0'}}=c\,[w]_{A_{p,q}}^{{\gamma} \frac{q_0}{p_0'}\frac{p'}{q}}.$$
This proves the theorem.
\end{proof}

\section{Proofs of strong-type results using extrapolation}\label{bounds}

We will need to use the following weighted versions of $M_\al$. For $0\leq \al < n$, let
$$M_{\al, \nu}^cf(x)= \sup_{Q_x} \frac{1}{\nu(Q_x)^{1-\al/n}}\int_{Q_x} |f(y)| \ d\nu,$$
where the supremum is over all cubes $Q_x$ with center $x$.
 A dyadic version of $M_{\al}$ was first introduced
by Sawyer in \cite{Saw1}.
This maximal function will be an effective tool in obtained the estimates for $I_\al$. The following lemma will be used in the proofs of Theorems
\ref{ShpFracInt} and \ref{ShpFracMax}.

\begin{lem}\label{WeightFrac} Let $0\leq \al <n$ and $\nu$ be a positive Borel measure. Then,
$$\|M_{\al,\nu}^cf\|_{L^q(\nu)}\leq c\, \|f\|_{L^p(\nu)}$$
for all $1<p\leq q<\infty$ that satisfy $1/p-1/q=\al/n$.  Furthermore,
the constant $c$  is independent of $\nu$ (it depends only on the
dimension and $p$).
\end{lem}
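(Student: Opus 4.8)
The plan is to prove the weighted fractional maximal bound by a dyadic discretization followed by a Sawyer-type good-$\lambda$/level-set argument, working entirely with respect to the measure $\nu$. First I would reduce to a dyadic centered maximal function: a standard three-lattice (or $3^n$ shifted dyadic grids) argument shows $M^c_{\al,\nu}f(x) \leq c_n \sum_{j} M^{\D_j}_{\al,\nu}f(x)$, where each $M^{\D_j}_{\al,\nu}$ is the maximal operator taken over cubes in a fixed dyadic lattice $\D_j$ containing $x$, and $\nu(3Q) \leq c_n \nu(Q)$ is \emph{not} needed because we only use that each cube is contained in a bounded number of dyadic cubes from the shifted lattices. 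Actually, to avoid any doubling hypothesis on $\nu$, the cleanest route is to first pass from cubes centered at $x$ to dyadic cubes \emph{containing} $x$ via the shifted-grid trick, since a cube centered at $x$ of side $\ell$ is contained in some dyadic cube of side between $\ell$ and $2\ell$ from one of $3^n$ universal shifted lattices, and $\nu$ of that dyadic cube is at least $\nu$ of the original cube; the factor $|Q|^{1-\al/n}$ in the denominator only changes by a dimensional constant. Thus it suffices to bound $M^{\D,d}_{\al,\nu}f(x) = \sup_{Q \ni x,\, Q \in \D} \frac{1}{\nu(Q)^{1-\al/n}}\int_Q |f|\,d\nu$.

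Next, for the dyadic operator I would run the standard Calder\'on-Zygmund stopping-time decomposition relative to $\nu$. Fix $a = 2^{n+1}$ (or any constant $>1$) and for each integer $k$ let $\{Q^k_j\}$ be the maximal dyadic cubes with $\frac{1}{\nu(Q^k_j)^{1-\al/n}}\int_{Q^k_j}|f|\,d\nu > a^k$. These give the level sets $\Omega_k = \{M^{\D,d}_{\al,\nu}f > a^k\}$, the $Q^k_j$ are pairwise disjoint for fixed $k$, and by maximality $\frac{1}{\nu(Q^k_j)^{1-\al/n}}\int_{Q^k_j}|f|\,d\nu \leq a^{k} \cdot 2^{n(1-\al/n)} \leq c_n a^k$ as well (using that the dyadic parent has at most $2^n$ times the $\nu$-measure — wait, $\nu$ need not be dyadically doubling, so instead one uses $\nu(Q^k_j) \leq \nu(\widehat{Q^k_j})$ and the parent $\widehat{Q^k_j}$ fails the stopping condition, giving $\int_{Q^k_j}|f|\,d\nu \leq \int_{\widehat{Q^k_j}}|f|\,d\nu \leq a^k \nu(\widehat{Q^k_j})^{1-\al/n}$; the ratio $\nu(\widehat{Q^k_j})/\nu(Q^k_j)$ is then controlled on average after summing in $j$, which is the point where the argument must be done carefully). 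Then I would estimate
\begin{align*}
\|M^{\D,d}_{\al,\nu}f\|_{L^q(\nu)}^q
&= q\int_0^\infty \lambda^q \,\nu(\{M^{\D,d}_{\al,\nu}f > \lambda\})\,\frac{d\lambda}{\lambda}
\leq c \sum_k a^{kq}\,\nu(\Omega_k \setminus \Omega_{k+1}),
\end{align*}
and on each $Q^k_j$ that is not entirely inside $\Omega_{k+1}$ one has $a^k \leq c_n\, \nu(Q^k_j)^{\al/n - 1}\int_{Q^k_j}|f|\,d\nu$, so that
\begin{align*}
\sum_k a^{kq}\nu(\Omega_k\setminus\Omega_{k+1})
\leq c \sum_{k,j} \Big(\nu(Q^k_j)^{\al/n}\,\frac{1}{\nu(Q^k_j)}\int_{Q^k_j}|f|\,d\nu\Big)^{q} \nu(Q^k_j)^{\,1 - q\al/n}
= c\sum_{k,j} \Big(\tfrac{1}{\nu(Q^k_j)}\int_{Q^k_j}|f|\,d\nu\Big)^{q}\nu(Q^k_j),
\end{align*}
using $1 - q\al/n + q\al/n \cdot 1 = 1$ — i.e. the exponent of $\nu(Q^k_j)$ is exactly $1$, which is precisely what the relation $1/p - 1/q = \al/n$ is engineered to produce when combined with the next step. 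Then by the disjointness of the $Q^k_j$ and the fact that $\frac{1}{\nu(Q^k_j)}\int_{Q^k_j}|f|\,d\nu \lesssim a^{k+1}\nu(Q^k_j)^{-\al/n}$ is comparable to $a^k$ times a controlled factor, I rewrite the sum and compare it to $\|M^\D_\nu f\|_{L^q(\nu)}$ where $M^\D_\nu$ is the ordinary (non-fractional) dyadic Hardy-Littlewood maximal operator with respect to $\nu$; but more directly, Carleson-embedding / the fact that $\sum_j \nu(Q^k_j) \le \nu(\Omega_k)$ with geometric decay in $k$ lets one sum the series and land on $\|f\|_{L^p(\nu)}^q$ after one application of H\"older with exponents $q/p > 1$ and $(q/p)'$, the latter absorbing the extra power $\nu(Q^k_j)^{1 - q/p}$ that appears. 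The constant depends only on $n$ and $p$ since every constant introduced (the lattice count $3^n$, the stopping constant $a$, dimensional doubling of dyadic parents) is independent of $\nu$.

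The main obstacle I expect is precisely the absence of any doubling assumption on $\nu$: one cannot pass freely from a cube to its parent or to a dilate with $\nu$-measure comparable, so the stopping-time estimate "$\int_{Q^k_j}|f|\,d\nu$ is comparable to $a^k\nu(Q^k_j)^{1-\al/n}$" holds only in the "$\leq$" direction with the parent's measure on the right, and one must be careful that summing $\sum_j \nu(\widehat{Q^k_j})$ does not blow up — this is handled because the parents at level $k$, while possibly overlapping, overlap with bounded multiplicity $2^n$ within a fixed dyadic grid, or alternatively one organizes the principal cubes so that $\sum_j \nu(\widehat{Q^k_j}) \le 2^n\nu(\Omega_{k-1})$. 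An alternative, possibly cleaner, route that sidesteps this: use the known unweighted bound $M_\al : L^p(\R^n, dx) \to L^q(\R^n,dx)$ is false to transfer — no; rather, observe $M^c_{\al,\nu}f(x) \leq (M^c_\nu f(x))^{1-q\al/(nq)}\cdots$ — this interpolation-type trick writes $\frac{1}{\nu(Q)^{1-\al/n}}\int_Q|f|d\nu = \big(\frac{1}{\nu(Q)}\int_Q |f|\,d\nu\big) \nu(Q)^{\al/n}$ and one would want $\nu(Q)^{\al/n}$ bounded, which it is not, so the honest level-set argument above seems unavoidable. I would therefore carry out the good-$\lambda$ argument with the bounded-overlap bookkeeping for the dyadic parents, which is the one genuinely technical point.
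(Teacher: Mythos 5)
The flaw is in your very first reduction, not in the stopping-time bookkeeping you worry about later. You pass from cubes centered at $x$ to shifted dyadic cubes containing $x$ and assert that ``the factor $|Q|^{1-\alpha/n}$ in the denominator only changes by a dimensional constant.'' But the denominator in $M^c_{\alpha,\nu}$ is $\nu(Q)^{1-\alpha/n}$, not $|Q|^{1-\alpha/n}$, and when you replace $Q$ by a slightly larger dyadic $Q'$ there is no control whatsoever on $\nu(Q')/\nu(Q)$ for an arbitrary Borel measure $\nu$ --- that is precisely the doubling hypothesis the lemma dispenses with. From $Q \subset Q'$ one only gets
\[
\frac{1}{\nu(Q)^{1-\alpha/n}}\int_Q |f|\,d\nu \;\ge\; \frac{1}{\nu(Q')^{1-\alpha/n}}\int_Q |f|\,d\nu,
\]
the wrong inequality: the centered operator is not dominated pointwise by any finite sum of shifted dyadic fractional maximal operators when $\nu$ is non-doubling. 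The Calder\'on--Zygmund machinery that follows is therefore applied to an operator you have not actually reduced to.

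The centered geometry is not cosmetic here; it is what makes a $\nu$-independent bound possible, via the Besicovitch covering lemma, which produces a bounded-overlap subcover from any family of cubes centered at the points of a set, for any Borel measure and with no doubling assumption. That is the paper's route. The strong $(n/\alpha,\infty)$ bound is immediate from H\"older's inequality, since the factors $\nu(Q)^{1-\alpha/n}$ cancel exactly. The weak $(1,n/(n-\alpha))$ bound follows from Besicovitch: select centered cubes $Q_i$ with $\nu(Q_i)^{1-\alpha/n}<\lambda^{-1}\int_{Q_i}|f|\,d\nu$ covering the level set with overlap bounded by a dimensional constant, so that $\nu(Q_i)<\bigl(\lambda^{-1}\int_{Q_i}|f|\,d\nu\bigr)^{n/(n-\alpha)}$, and then use $\sum_i a_i^{s}\le\bigl(\sum_i a_i\bigr)^{s}$ for $s=n/(n-\alpha)>1$ together with the bounded overlap. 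Marcinkiewicz interpolation then yields all intermediate $(p,q)$ with constant depending only on $n$ and $p$. If you wish to retain a stopping-time flavor, you must run the selection at each height directly on centered cubes via Besicovitch rather than on dyadic ones --- but at that point you have in effect rederived the weak endpoint and should simply interpolate.
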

The proof of Lemma \ref{WeightFrac} can be obtained by interpolation.  In fact, the
strong $(n/\al,\infty)$ inequality follows directly from H\"older's inequality, while
 a weak-$(1,(n/\al)')$ estimate is a consequence of the
Besicovich covering lemma.

 \begin{proof}[Proof of Theorem \ref{ShpFracInt}]  The equation $q_0/p_0'=1-\al/n$ along with the fact that $1/p_0-1/q_0=\al/n$ yields
 $$p_0=\frac{2-\al/n}{\al/n-(\al/n)^2+1} \quad {\rm and} \quad q_0=\frac{2-\al/n}{1-\al/n}.$$
 We want to show the linear estimate
\begin{equation}\label{mainestimate} \|wI_\al f\|_{L^{q_0}}\leq c\,[w]_{A_{p_0,q_0}}\|wf\|_{L^{p_0}}.\end{equation}
Notice that  \eqref{mainestimate} is equivalent to
 \be\label{equivalent}
   \|I_\al(f\sigma)\|_{L^{q_0}(u)}\leq c\,[w]_{A_{p_0,q_0}}\|f\|_{L^{p_0}(\sigma)},
   \ee
 where $u=w^{q_0}$ and $\sigma=w^{-p_0'}$.  Moreover, by duality, showing \eqref{equivalent}
is equivalent to prove
\be\label{equivalentdual}
\intRn I_\al (f\sigma)g u \ dx \leq c\,[w]_{A_{p_0,q_0}} \left(\intRn f^{p_0}\sigma \ dx\right)^{1/p_0}\left( \intRn g^{q_0'} u \ dx \right)^{1/q_0'}
\ee
for all $f$ and $g$ non-negative bounded functions with compact support.

We first discretize the operator $I_\al$ as follows.  Given a
non-negative function $f$, \bey
I_\al f(x)&=&\sum_{k\in \Z}\int_{2^{k-1}<|x-y|\leq 2^{k}} \frac{f(y)}{|x-y|^{n-\al}}\ dy \\
&\leq & c\sum_{k}\sum_{\stackrel{Q\in \D}{\ell(Q)=2^k}}  \chi_Q(x)\frac{1}{\ell(Q)^{n-\al}}\int_{|x-y|\leq \ell(Q)} f(y) \ dy  \\
&\leq& c\sum_{Q\in \D} \chi_Q(x) \frac{|Q|^{\al/n}}{|Q|}\int_{3Q} f \ dy
\eey
where the last inequality holds because  if  $x\in Q$,  then $B(x,\ell(Q))\subseteq 3Q$.

One immediately gets then
$$\intRn I_\al (f\sigma)g u \ dx \leq c \, \sum_{\D}\frac{|Q|^{\al/n}}{|Q|}\int_{3Q} f\sigma\, dx  \int_{Q} g u\, dx.
$$
The next crucial step is to pass to a more convenient sum where the
family of dyadic cubes is replaced by an appropriate subset formed
by a family of Calder\'on-Zygmund dyadic cubes. We combine ideas
from the work of  Sawyer and Wheeden in \cite[pp. 824-829]{SW},
together with some techniques from \cite{CP3} (see also \cite{CP2}).

Fix $a>2^n$. Since $g$ is bounded with compact support, for each $k\in \Z$,  one can construct a collection  $\{Q_{k,j}\}_j$ of pairwise disjoint maximal dyadic cubes
(maximal with respect to inclusion)  with the property that
$$
a^k < \frac{1}{|Q_{k,j}|}\int_{Q_{k,j}} gu\,dx.
$$
By maximality  the above also gives
$$
\frac{1}{|Q_{k,j}|}\int_{Q_{k,j}} gu\,dx \leq 2^na^k.
$$
Although the maximal cubes in the whole family  $\{Q_{k,j}\}_{k,j}$ are disjoint in $j$ for each fixed $k$, they  may not be disjoint for different $k$'s.  If we define for each $k$ the collection
$$
{\mathcal C}^k = \{ Q\in \D:  a^k < \frac{1}{|Q_{}|}\int_{Q_{}} gu\,dx \leq a^{k+1}\},
$$
then each dyadic cube $Q$ belongs to only one ${\mathcal C}^k$ or $gu$ vanishes on it. Moreover,
each $Q \in {\mathcal C}^k $ has to be contained in one of the maximal cubes $Q_{k,j_0}$ and  verifies for all $Q_{k,j}$
$$
\frac{1}{|Q_{}|}\int_{Q_{}} gu\,dx \leq a^{k+1} \leq  \frac{a}{|Q_{k,j}|}\int_{Q_{k,j}} gu\,dx.
$$
From these properties and the fact  that for any dyadic cube $Q_0$,
$$
\sum_{Q\in \D, Q\subset Q_0} |Q|^{\al/n} \int_{3Q} f\sigma\, dx
\leq c_\alpha |Q_0|^{\al/n} \int_{3Q_0} f\sigma\, dx,
$$
one easily deduces as in \cite{SW} that
$$
\sum_{\D}\frac{|Q|^{\al/n}}{|Q|}\int_{3Q} f\sigma\, dx  \int_{Q} g u\, dx
\leq  a \, c_{\al}
\sum_{k,j} \frac{|Q_{k,j}|^{\al/n}}{|Q_{k,j}|}\int_{3Q_{k,j}} f\sigma\, dx  \int_{Q_{k,j}} g u\, dx.
$$
Notice also that,
$$[w]_{A_{p_0,q_0}}=\sup_Q \frac{u(Q)}{|Q|}\left(\frac{\sigma(Q)}{|Q|}\right)^{1-\al/n}<\infty,$$
so we can estimate
\begin{eqnarray}
\intRn I_\al (f\sigma)g u \, dx  & \leq & c \, \sum_{k,j}\frac{|Q_{k,j}|^{\al/n}}{|Q_{k,j}|}\int_{3Q_{k,j}}    \!\! f\sigma \, dx \int_{Q_{k,j}}   \!\! g u\, dx  \nonumber \\
&=& c\, \sum_{k,j} \frac{1}{\sigma(5Q_{k,j})^{1-\al/n}}\int_{3Q_{k,j}}   \!\! f \sigma \ dx \ \frac{1}{u(3Q_{k,j})}\int_{Q_{k,j}}   \!\! gu \ dx  \nonumber  \\
& &  \times \, \frac{u(3Q_{k,j})}{|Q_{k,j}|} \left(\frac{\sigma(5Q_{k,j})}{|Q_{k,j}|}\right)^{1-\al/n} |Q_{k,j}| \nonumber  \\
&\leq & c\,[w]_{A_{p_0,q_0}}\sum_{k,j}
\frac{1}{\sigma(5Q_{k,j})^{1-\al/n}}\int_{3Q_{k,j}}  \!\! f \sigma \ dx
\frac{1}{u(3Q_{k,j})}\int_{Q_{k,j}} \!\! \!\!gu \ dx\ |Q_{k,j}|, \label{inamoment}
\end{eqnarray}
where we have set up things to use, in a moment,  certain centered maximal functions.

Before we do so, we need one last property about the
Calder\'on-Zygmund cubes $Q_{k,j}$. We need to pass to a disjoint
collection of sets  $E_{k,j}$ each of which retains a substantial
portion of the mass of the corresponding cube $Q_{k,j}$.

Define the sets
$$E_{k,j}= Q_{k,j}\cap \{x\in R^n:  a^{k}< M^d(gu) \leq a^{k+1}\},$$
where $M^d$ is the dyadic maximal function.
The family $\{E_{k,j}\}_{k,j}$ is pairwise disjoint for all $j$ and $k$.
Moreover, suppose that for some point $x \in Q_{k,j}$ it happens that  $M^d(gu)(x)>a^{k+1}$. By the maximality of $Q_{k,j}$,  this implies that there exist  some dyadic cube $Q$ such that $x\in Q \subset Q_{k,j}$ and  so that
the average of $gu$ over $Q$ is larger than $a^{k+1}$.  It must also hold then that
$M^d(gu\chi_{Q_{k,j}})(x)>a^{k+1}$. But
$$|\{M^d(gu\chi_{Q_{k,j}})>a^{k+1}\} | \leq \frac {1}{a^{k+1}} \int_{Q_{k,j}} gu\,dx\leq \frac {2^n |Q_{k,j}|}{a}.$$
It follows that
$$
|E_{k,j}|\geq (1-\frac{2^n}{a})|Q_{k,j}|.$$
Recalling  now that
$1=u^{\frac{n}{n-\al}}\sigma=u^{\frac{1}{q_0}\frac{n}{n-\al}}\sigma^{\frac{1}{q_0}}$,
we  can use  H\"older's inequality to write
\be \label{eforq}
|Q_{k,j}|\approx |E_{k,j}|=\int_{E_{k,j}}u^{\frac{1}{q_0}\frac{n}{n-\al}}\sigma^{\frac{1}{q_0}}\leq u(E_{k,j})^{1/q_0'}\sigma(E_{k,j})^{1/q_0},
\ee
since
$$\frac{q_0'}{q_0}\frac{n}{n-\al}=1.$$
 With \eqref{eforq} we go back to the string of inequalities to estimate  $\int I_\alpha(f\sigma)\, gu \,dx$. Using the
discrete version of H\"older's inequality, we can estimate in \eqref{inamoment}
 \bey
&\leq & c\,[w]_{A_{p_0,q_0}}\left(\sum_{k,j} \left(\frac{1}{\sigma(5Q_{k,j})^{1-\al/n}}\int_{3Q_{k,j}}f \sigma \ dx\right)^{q_0} \sigma(E_{k,j})\right)^{1/q_0} \\
&&\quad \times \left(\sum_{k,j}\left( \frac{1}{u(3Q_{k,j})}\int_{Q_{k,j}} gu \ dx\right)^{q_0'}u(E_{k,j})\right)^{1/q_0'} \\
&\leq & c\,[w]_{A_{p_0,q_0}}\left(\sum_{k,j} \int_{E_{k,j}}(M^c_{\al,\sigma} f)^{q_0}\sigma \ dx\right)^{1/q_0} \left(\sum_{k,j}\int_{E_{k,j}} (M_u^c g)^{q_0'} u\ dx\right)^{1/q_0'} \\
&\leq & c\,[w]_{A_{p_0,q_0}}\left( \int_{\R^n}(M^c_{\al,\sigma} f)^{q_0}\sigma \ dx\right)^{1/q_0} \left(\int_{\R^n} (M_u^c g)^{q_0'} u\ dx\right)^{1/q_0'} \\
&\leq & c\,[w]_{A_{p_0,q_0}}\left( \int_{\R^n}f^{p_0}\sigma \ dx\right)^{1/p_0} \left(\int_{\R^n} g^{q_0'} u\ dx\right)^{1/q_0'}. \\
\eey
Here we have denoted by $M_u^c = M_{0,u}^c$, the centered maximal function with respect
to the measure $u$. We have also used in the last step Lemma~\ref{WeightFrac},  which gives the
boundedness of $M_u^c$  and $ M^c_{\al,\sigma}$
with operator norms independent of the corresponding measure. We obtain then the desired linear estimate
\begin{equation}\label{initialextraphyp} \|wI_\al f \|_{L^{q_0}}\leq c \, [w]_{A_{p_0,q_0}}\|wf\|_{L^{p_0}}.
\end{equation}
\end{proof}

From this last estimate we can extrapolate (Theorem \ref{ExtThm}) to get,
\begin{equation} \label{applicatextrap}
\|wI_\al f \|_{L^q}\leq
c\,[w]_{A_{p,q}}^{\max\{1,(1-\al/n)p'/q\}}\|wf\|_{L^p}
\end{equation}
for all $1<p<q<\infty$ with $1/p-1/q=\al/n$. Moreover, a  simple duality argument gives then 


%
%
\begin{equation}\label{reducedset}
\|I_\al\|_ {L^p(w^p)\to L^q(w^q) } \leq c [w]_{A_{p,q}}^{\min\{\max(1-\frac{\al}{n}, \frac{p'}{q}), \max( 1, (1-\frac{\al}{n})\frac{p'}{q})\}}.
\end{equation}
This  is sharp for $p'/q\in
(0,1-\al/n]\cup[n/(n-\al),\infty)$.  We obtain the right estimate in the full range of exponents in the next section. The sharpness will be obtained in Section~\ref{examples}.

\section{Proof of the weak-type results and sharp bounds for the full range of exponents}\label{weak-type}

We start with the weak-type version of the extrapolation theorem.

\begin{proof}[Proof of Corollary \ref{ExtWeak}]
Note that Theorem~\ref{ExtThm} does not require $T$ to be linear. We can simply  apply then the result to the operator $T_\lambda f= \lambda \chi_{\{|Tf|>\lambda\}}$.  Fix $\lambda>0$, then
\bey
\|wT_\lambda f \|_{L^{q_0}}&=&\lambda w^{q_0}(\{x : |Tf(x)|>\lambda\})^{1/q_0} \\
&\leq & \|Tf\|_{L^{q_0,\infty}(w^{q_0})} \\
&\leq & c [w]_{A_{p_0,q_0}}^\gamma \|wf\|_{L^{p_0}}, \eey
with constant independent of $\lambda$. Hence by Theorem
\ref{ExtThm} if $w\in A_{p,q}$, $T_\lambda$ maps $L^q(w^q)\ra
L^p(w^p)$ for all $1/p-1/q=1/p_0-1/q_0$ and with bound
$$
\|wT_\lambda f \|_{L^{q}}\leq c\,
[w]_{A_{p,q}}^{\gamma\max\{1,\frac{q_0}{p_0'}\frac{p'}{q}\}}\|fw\|_{L^{p}}.
$$
with $c$ independent of $\lambda$. Hence,
$$\|Tf\|_{L^{q,\infty}(w^q)}=\sup_{\lambda>0} \|wT_\lambda f \|_{L^q}\leq c\, [w]_{A_{p,q}}^{\gamma\max\{1,\frac{q_0}{p_0'}\frac{p'}{q}\}} \|fw\|_{L^{p}}.
$$
\end{proof}

\begin{proof}[Proof of Theorem \ref{ShpWk}]  {\it First Proof (valid for  $p\geq 1$).}

We apply Corollary \ref{ExtWeak} with  $p_0=1$,
$q_0=n/(n-\al)=(n/\al)'$, and $u=w^{q_0}$.

Actually, we are going to prove a better estimate, namely
\be \label{fractF-S.Ineq.} \|I_\al f \|_{L^{q_0,\infty}(u)}\leq c\,
\|f \|_{L^1((Mu)^{1/q_0})} \ee
for any weight $u$.  From this estimate, and since by
\eqref{Aq1} the $A_{1,(n/\al)'}$ condition for $w$ is equivalent to
$$ M(u)\leq [w]_{A_{1,(n/\al)'}} u,$$
we can deduce
$$\|I_\al f \|_{L^{q_0,\infty}(u)}\leq c\,[w]_{A_{1,(n/\al)'}}^{1-\al/n} \|f w\|_{L^1}.$$
The weak extrapolation Corollary \ref{ExtWeak} with
$\gamma={1-\al/n}$ gives the right estimate.

In order to prove \eqref{fractF-S.Ineq.},  we note that
$\|\cdot\|_{L^{q_0,\infty}(u)}$ is equivalent to a norm since
$q_0>1$. Hence, we may use Minkowski's integral inequality as
follows
\be \label{Mink}\|I_\al f\|_{L^{q_0,\infty}(u)} \leq c_q\intRn
|f(y)| \; \||\cdot-y|^{\al-n}\|_{L^{q_0,\infty}(u)} \ dy. \ee
We can finally calculate the inner norm by
\bey
\||\cdot-y|^{\al-n}\|_{L^{q_0,\infty}(w^q)} &=& \sup_{\lambda>0} \lambda u(\{x:|x-y|^{\al-n}>\lambda\})^{1/q_0}\\
&=& (\sup_{t>0} \frac{1}{t^n} u(\{x:|x-y|<t \}))^{1/q_0} \\
&= & c Mu(y)^{1/q_0}.\\
\eey
Once again,  the sharpness of the exponent $1-\al/n$ will be shown with
an example in Section~\ref{examples}.\\

\noindent {\it Second Proof  (valid for  $p> 1$ only).}

We need to recall another characterization of the weak-type
inequality for $I_\alpha$ for two weights. This characterization is
due to Gabidzashvili and Kokilashvili \cite{GK} 
and establishes that for $1<p<q<\infty$, the two-weight weak type
inequality,
\begin{equation}\label{weak2}
\|I_\alpha\|_{L^p(v)\to L^{q,\infty}(u)} <\infty
\end{equation}
hods if and only if
\begin{equation}  \label{e.Glo}
 \sup_Q \left( \int_Q u(x) \
dx\right)^{1/q} \left( \int_{\R^n} ( |Q|^{1/n}
+|x_Q-x|)^{(\alpha-n)p'} v(x)^{1-p'} \ dx\right)^{1/p'} <\infty
\end{equation}
where $x_Q$ denotes the center of the cube $Q$. We will refer to \eqref{e.Glo} as the {\it global testing condition},  given its global character when compared to the {\it local testing conditions} of Sawyer. We will use the notation
$$[u ,v]_{\textup{Glo}(p,q)}=\sup_Q \left( \int_Q u(x) \
dx\right)^{1/q} \left( \int_{\R^n} ( |Q|^{1/n}
+|x_Q-x|)^{(\alpha-n)p'} v(x)^{1-p'} \ dx\right)^{1/p'}.
$$
It follows from the proof  in \cite{GK}  (see also \cite{SW}) that
\begin{equation}  \label{GaKo}
\|I_\alpha\|_{L^p(v)\to L^{q,\infty}(u)} \approx [u
,v]_{\textup{Glo}(p,q)}. \end{equation}

We now need a reverse doubling property satisfied by $w^q$ when
$w\in A_{p,q}$ class (see \cite{SW} for precise definitions).
\begin{lem}\label{l.infty} Let  $w\in A_{p,q}$, then for any cube $ Q$ we have the  estimate
\begin{equation}\label{e.infty}
\frac { \int_Q w ^{q}\;dx } {\int _{2Q} w ^{q} \; dx } \le 1 - c [w
] _{A_{p,q}} ^{-1}
\end{equation}
for an absolute constant $ c$.
\end{lem}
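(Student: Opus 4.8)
The plan is to prove the reverse doubling estimate \eqref{e.infty} by exploiting the $A_{p,q}$ condition through its equivalent formulation $[w]_{A_{p,q}}=[w^q]_{A_{1+q/p'}}$ from \eqref{property1}. Writing $u=w^q$ and $r=1+q/p'$, the $A_r$ condition for $u$ reads
$$\left(\frac{1}{|Q|}\int_Q u\right)\left(\frac{1}{|Q|}\int_Q u^{1-r'}\right)^{r-1}\le [u]_{A_r}$$
for every cube $Q$. The key mechanism is that an $A_\infty$ weight cannot concentrate almost all of its mass on half of a cube; the precise quantitative form is what we need. First I would fix a cube $Q$ and consider its dyadic doubling $2Q$ (or, depending on how the lemma is stated, a dyadic child relationship). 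The idea is to compare $u(Q)$ and $u(2Q)$ by controlling $u(2Q\setminus Q)$ from below.

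The key step is as follows. Since $u\in A_r$, the dual weight $\sigma=u^{1-r'}$ satisfies a reverse relationship, and more usefully one has the standard $A_\infty$-type consequence: there exist constants $\delta,c>0$ depending only on $[u]_{A_r}$ (in fact one can take the dependence to be of the form $\delta\sim [u]_{A_r}^{-1}$ times a dimensional constant) such that for any cube $Q$ and any measurable $E\subset Q$,
$$\frac{u(E)}{u(Q)}\le c\left(\frac{|E|}{|Q|}\right)^{\delta}.$$
Applying this with $Q$ replaced by $2Q$ and $E=2Q\setminus Q$ (which has $|E|/|2Q|=1-2^{-n}$) would go the wrong direction; instead I apply it with $E=Q$ inside $2Q$ when $|Q|/|2Q|=2^{-n}$ is small — but that only gives $u(Q)/u(2Q)\le c\,2^{-n\delta}$, which is not quite $1-c[w]_{A_{p,q}}^{-1}$ unless we track constants carefully. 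The cleaner route is to use the quantitative reverse-Hölder / $A_\infty$ inequality in the sharp form due to Hytönen–Pérez: $\fint_Q u\,dx \le 2\left(\fint_Q u^{1+\epsilon}\right)^{1/(1+\epsilon)}$ with $\epsilon\sim ([u]_{A_\infty})^{-1}$, and then estimate $u(2Q\setminus Q)$ from below via Hölder's inequality using that $|2Q\setminus Q|\gtrsim|2Q|$. Concretely, $u(2Q\setminus Q)\ge |2Q\setminus Q|^{1+1/\epsilon'}\big(\int_{2Q\setminus Q}u^{-\epsilon'/\epsilon}\big)^{-1/\epsilon'}$-type manipulations, or more simply: by the $A_r$ condition applied to $2Q$, $\sigma(2Q\setminus Q)\ge c\,\sigma(2Q)\ge c [u]_{A_r}^{-1/(r-1)}|2Q|^{r'}u(2Q)^{-1/(r-1)}$, and then by Hölder $|2Q\setminus Q|\le u(2Q\setminus Q)^{1/r}\sigma(2Q\setminus Q)^{1/r'}$, which yields a lower bound $u(2Q\setminus Q)\ge c[u]_{A_r}^{-1}u(2Q)$, i.e. $u(Q)\le(1-c[u]_{A_r}^{-1})u(2Q)$. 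Translating via $[u]_{A_r}=[w^q]_{A_{1+q/p'}}=[w]_{A_{p,q}}$ gives exactly \eqref{e.infty}.

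The main obstacle I anticipate is getting the constant to appear \emph{linearly} as $[w]_{A_{p,q}}^{-1}$ rather than as some power or with a worse (e.g. exponential) dependence. The naive $A_\infty$ argument produces $1-c[w]_{A_{p,q}}^{-\delta}$ for some small $\delta$, or worse, so the care lies in choosing the right two-sided estimate: one wants to apply Hölder's inequality \emph{once} with the exponents $r$ and $r'$ to the set $2Q\setminus Q$, using only the information $\sigma(2Q\setminus Q)\gtrsim \sigma(2Q)$ (which follows because $\sigma\in A_{r'}\subset A_\infty$ and $|2Q\setminus Q|/|2Q|\ge 1-2^{-n}$ is bounded below — and for \emph{this} direction a crude $A_\infty$ bound on $\sigma$ suffices since we only need the ratio bounded below by an absolute constant, not close to $1$), and the $A_r$ inequality on $2Q$ to convert between $u$ and $\sigma$. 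That single clean application is what produces the linear dependence. I would also need to double-check the harmless point that the lemma as used later is needed for $2Q$ versus $Q$ (or an arbitrary dyadic ancestor), and that the constant $c$ can be taken uniform; both follow from the scale-invariance of the $A_r$ condition.
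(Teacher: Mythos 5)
Your core plan --- rewrite the claim for $u=w^q$, $\sigma=w^{-p'}$, $r=1+q/p'$, apply H\"older with exponents $(r,r')$ on the annulus using $1=u^{1/r}\sigma^{1/r'}$, and close with the $A_r$ condition on the big cube --- is the right one, and it is essentially the paper's argument (the paper runs H\"older with exponents $(q,q')$ on $\int_E w\cdot w^{-1}$, then Jensen to pass from $q'$ to $p'$, and takes $E=Q\setminus\tfrac12Q$; your single H\"older collapses those two steps). But the ``key step'' as you wrote it has the crucial inequality for $\sigma$ in the wrong direction, and the chain as stated does not close.

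You say the needed input is $\sigma(2Q\setminus Q)\gtrsim\sigma(2Q)$ and you propose to get it from $\sigma\in A_{r'}\subset A_\infty$. Feeding a \emph{lower} bound on $\sigma(2Q\setminus Q)$ into H\"older gives nothing: from
\begin{equation*}
|2Q\setminus Q|\le u(2Q\setminus Q)^{1/r}\,\sigma(2Q\setminus Q)^{1/r'}
\end{equation*}
one rearranges to
\begin{equation*}
u(2Q\setminus Q)\ \ge\ \frac{|2Q\setminus Q|^{\,r}}{\sigma(2Q\setminus Q)^{\,r-1}},
\end{equation*}
and a lower bound on $\sigma(2Q\setminus Q)$ only shrinks the right-hand side, so no lower bound on $u(2Q\setminus Q)$ follows. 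What you need is the trivial \emph{upper} bound $\sigma(2Q\setminus Q)\le\sigma(2Q)$ (just monotonicity of the measure $\sigma$), followed by the $A_r$ inequality on $2Q$, namely $\sigma(2Q)^{r-1}\le[u]_{A_r}\,|2Q|^{r}/u(2Q)$. Combining,
\begin{equation*}
u(2Q\setminus Q)\ \ge\ \frac{|2Q\setminus Q|^{\,r}\,u(2Q)}{[u]_{A_r}\,|2Q|^{\,r}}\ =\ (1-2^{-n})^{\,r}\,[u]_{A_r}^{-1}\,u(2Q),
\end{equation*}
which, since $[u]_{A_r}=[w]_{A_{p,q}}$, is exactly \eqref{e.infty}. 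In particular the whole $A_\infty$ detour for $\sigma$ (crude or sharp, reverse-H\"older or otherwise) is unnecessary and, as used, pointed the wrong way; once you replace $\gtrsim$ by the trivial $\le$, the linear dependence on $[w]_{A_{p,q}}$ comes out for free, which is the point you were worried about.
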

\begin{proof}

Let $ E\subset Q$.  Our goal is to show that
\begin{equation}\label{e.frac<}
\left(\frac {\lvert  E\rvert } {\lvert  Q\rvert } \right) ^{q}
[ w ] _{A_{p,q}}  ^{-1} \le \frac {\int _{E} w ^{q} \; dx } {\int_Q
w ^{q} \; dx}\,.
\end{equation}

Applying this with $ E=Q-\tfrac 12 Q  $ will prove the Lemma.  We can
estimate
\begin{align*}
\frac {\lvert  E\rvert } {\lvert  Q\rvert } & = \frac {\int _E w
\cdot w ^{-1}  } {\lvert  Q\rvert }
\\
& \le \Bigl[ \frac {\int _E  w ^{q} \; dx } {\lvert  Q\rvert }
\Bigr] ^{1/q} \Bigl[ \frac {\int _Q  w ^{-q'} \; dx } {\lvert
Q\rvert } \Bigr] ^{1/q'}
\\
&\le \Bigl[ \frac {\int _E  w ^{q} \; dx } {\lvert  Q\rvert } \Bigr]
^{1/q} \Bigl[ \frac {\int _E  w ^{-p'} \; dx } {\lvert  Q\rvert }
\Bigr] ^{1/p'} & (q'<p')
\\
&= \Bigl[ \frac {\int _E  w ^{q} \; dx } {\int _Q  w ^{q} \; dx  }
\Bigr] ^{1/q} \cdot \Bigl[ \frac {\int _Q  w ^{q} \; dx } {\lvert
Q\rvert } \Bigr] ^{1/q} \Bigl[ \frac {\int _Q  w ^{-p'} \; dx }
{\lvert  Q\rvert } \Bigr] ^{1/p'}
\\
&\le \Bigl[ \frac {\int _E  w ^{q} \; dx } {\int _Q  w ^{q} \; dx  }
\Bigr] ^{1/q} [ w ] _{A_{p,q}} ^{1/q} \,.
\end{align*}
The proof is complete.
\end{proof}

We now claim that in the case $u=w^q$ and $v=w^p$ the constant in the global testing condition and the $A_{p,q}$ constant of $w$ are comparable: 
\begin{equation}\label{e.loc<A}
[w ^{q} ,w ^{p}]_{\textup{Glo}(p,q)} \approx [ w ] _{A_{p,q}}
^{(1-\alpha /n)} \,.
\end{equation}

\begin{proof}[Proof of \eqref{e.loc<A}.]
Observe that $ p' (1-\alpha /n) = 1+ p'/q$. One  of the inequalities in \eqref{e.loc<A} is clear. 
For the other we estimate
\begin{align*}
& \left( \int_Q w(x)^q \ dx\right)^{1/q} \left( \int_{\R^n} (
|Q|^{1/n}
+|x_Q-x|)^{(\alpha-n)p'} w(x)^{p(1-p')} \ dx\right)^{1/p'}\\
 & \le c \left(\int
_{Q} w ^{q}  \right) ^{1/q} \Biggl[ \sum _{j=0} ^{\infty } \lvert  2
^{j} Q\rvert ^{- p' (1-\alpha /n)} \int _{2 ^{j} Q} w ^{-p'} \Biggr]
^{1/p'}
\\
& = c\Biggl[ \sum _{j=0} ^{\infty } \left( \frac {\int _{Q} w ^{q}  }
{\int _{2^j Q} w^q} \right) ^{p'/q} \left( \frac  {\int _{2^j Q} w^q
} {\lvert  2 ^{j}Q\rvert } \right) ^{p'/q} \frac  {\int _{2 ^{j}Q} w
^{p'} } {\lvert  2 ^{j}Q\rvert } \Biggr] ^{1/p'}
\\
& \le c[ w ] _{A_{p,q}} ^{1/q} \Biggl[ \sum _{j=0} ^{\infty } \left(
\frac {\int _{Q} w ^{q}  } {\int _{2^j Q} w^q } \right) ^{p'/q}
\Biggr] ^{1/p'}
\\
& \le c[ w ] _{A_{p,q}} ^{1/q} \Biggl[ \sum _{j=0} ^{\infty }
(1-c [ w ] _{A_{p,q}} ^{-1} ) ^{p'j/q} \Biggr] ^{1/p'}
\\
& \le c[w] _{A_{p,q}} ^{1-\alpha /n} \,.
\end{align*}
Note that the next to last line follows from \eqref{e.infty} and an
immediate inductive argument. In the last line, we just use the
equality $1/q+1/p'=1-\alpha /n $.
\end{proof}

To conclude the second proof of Theorem \ref{ShpWk} we use
\eqref{GaKo}
\begin{equation*}  \label{GaKo2}
\|I_\alpha\|_{L^p(w^p)\to L^{q,\infty}(w^q)} \approx [w^p
,w^q]_{\textup{Glo}(p,q)} \approx [w] _{A_{p,q}} ^{1-\alpha /n}\,.
\end{equation*}

\end{proof}


We conclude this section by verifying that \eqref{strongweak} and
\eqref{weakest} yield Theorem~\ref{fullrange}. Indeed
\begin{equation}\label{strongweak3}
\|I_\alpha\|_{L^p(w^p)\to L^q(w^q)} \approx
\|I_\alpha\|_{L^p(w^p)\to L^{q,\infty}(w^q)}+
\|I_\alpha\|_{L^{q'}(w^{-q'})\to L^{p',\infty}(w^{-p'})}
$$
$$
\approx [w]_{A_{p,q}}^{1-\frac{\al}{n}} +
[w^{-1}]_{A_{q',p'}}^{1-\frac{\al}{n}} \approx
[w]_{A_{p,q}}^{(1-\frac{\al}{n})\max\{1,\frac{p'}{q}\}}
\end{equation}
since $[w^{-1}]_{A_{q',p'}}=[w]_{A_{p,q}}^{p'/q}$ and since
$[w]_{A_{p,q}}\geq1$.

\section{Proof of the sharp bounds for the fractional maximal function}\label{maximalresults}

\begin{proof}[Proof of Theorem \ref{ShpFracMax}] First notice that $M_\al \approx M_\al^c$ where $M_\al^c$ is the centered version.
Let $x\in \R^n$, $Q$ a cube centered at $x$, $u=w^q$,
$\sigma=w^{-p'}$ and $r=1+q/p'$.  Noticing that
$p'/q(1-\al/n)=r'/q$, we proceed as in \cite{Ler} to obtain
\bey
\frac{1}{|Q|^{1-\al/n}}\int_Q |f| \ dy &\leq & 3^{nr'/q}[w]_{A_{p,q}}^{p'/q(1-\al/n)} \left(\frac{|Q|}{u(Q)}\right)^{p'/q(1-\al/n)}\frac{1}{\sigma(3Q)^{1-\al/n}}\int_Q \frac{|f|}{\sigma} \sigma \ dy \\
&\leq&c\,[w]_{A_{p,q}}^{p'/q(1-\al/n)} \left(\frac{1}{u(Q)}\int_Q
M^c_{\al,\sigma}(f/\sigma)^{q/r'} \ dy\right)^{r'/q}. \eey
Taking the supremum over all cubes centered at $x$ we have the
pointwise estimate
$$
M_\al^c f(x) \leq c\,[w]_{A_{p,q}}^{p'/q(1-\al/n)} M_u^c\{M^c_{\al,\sigma}(f/\sigma)^{q/r'}u^{-1}\}(x)^{r'/q}.
$$
Using the fact that $M_u:L^{r'}(u)\ra L^{r'}(u)$ with operator norm
independent of $u$ combined with Lemma \ref{WeightFrac}, we get
\bey
 \|w\,M_\al f \|_{L^q}&\leq& c\, \|M_\al^cf\|_{L^q(u)} \\
 &\leq& c\,[w]_{A_{p,q}}^{p'/q(1-\al/n)} \|M_u^c\{M^c_{\al,\sigma}(f/\sigma)^{q/r'}u^{-1}\}\|_{L^{r'}(u)}^{r'/q} \\
&\leq& c\,[w]_{A_{p,q}}^{p'/q(1-\al/n)}\|f w\|_{L^p} ,\eey
which is the desired estimate.
\end{proof}

\section{Examples}\label{examples}

We will use the power weights considered in \cite{Buck} to show that  Theorems~\ref{ShpWk},
\ref{fullrange},  and \ref{ShpFracMax} are sharp.

Suppose
again $0<\al<n$ with
$$\frac1p-\frac1q=\frac{\al}{n}.$$
Let $w_\delta(x)=|x|^{(n-\delta)/p'}$ so that
$w_\delta\in A_{p,q}$, with
$$[w_\delta]_{A_{p,q}}=[w_\delta^q]_{A_{1+q/p'}}\approx \delta^{-q/p'}.$$
Then, if $f_\delta(x)= |x|^{\delta-n}\chi_{B}$, where $B$ is the
unit ball in $\R^n$, we have
$$\|w_\delta f_\delta  \|_{L^p}\approx \delta^{-1/p}.$$
For $x\in B$,
 $$M_\al f_\delta(x)\geq \frac{C}{|x|^{n-\al}}\int_{B(0,|x|)} |f_\delta(y)| \ dy \approx \frac{|x|^{\delta-n+\al}}{\delta},$$
and so we have
\bey \intRn w_\delta^q M_\al f_\delta(x)^q  \ dx \geq
\delta^{-q}\int_B |x|^{(\delta-n+\al)q}|x|^{(n-\delta)\frac{q}{p'}}
\ dx\approx \delta^{-q-1} .\eey
It follows that
\be \label{delta}\delta^{-1-1/q} \leq c\, \|w_\delta Mf_\delta
\|_{L^q}\leq
c\,[w_\delta]_{A_{p,q}}^{\frac{p'}{q}(1-\frac{\al}{n})}\|w_\delta
f_\delta \|_{L^p}\approx
\delta^{-(1-\frac{\al}{n})}\delta^{-1/p}=\delta^{-1-1/q},\ee
 showing Theorem
\ref{ShpFracMax} is sharp.

Next we show that the same example can be used to show that  the exponent in Theorem \ref{fullrange} is
sharp.  Assume first that $p'/q  \geq 1$
We simply observe that, pointwise,
$$M_\alpha \leq C I_\alpha$$
for some universal constant $C$. Then
using the same
$w_\delta$ and $f_\delta$ as above and the estimate in Theorem~\ref{fullrange}
 we arrive at the estimate in equation \eqref{delta} with $M_\al$
replaced by $I_\al$, showing sharpness.  The case when
$p'/q $
immediately follows by the duality arguments described after the proof of Theorem~\ref{ShpFracInt}.

Finally, we show that the exponent $1-\al/n$ in the estimate
\be \label{weakest1} \|I_\al f\|_{L^{q,\infty}(w^q)}\leq
c\,[w]_{A_{p,q}}^{1-\al/n} \|fw\|_{L^p}\ee
from Theorem \ref{ShpWk} is sharp for $p\geq1$.

By \eqref{property1}
\be \label{weakest2} \|I_\al f\|_{L^{q,\infty}(w^q)}\leq
c\,[w^q]_{A_{1+q/p'}}^{1-\al/n} \|fw\|_{L^p},\ee
and if we let $u=w^q$,
\be \label{weakest4} \|I_\al f\|_{L^{q,\infty}(u)}\leq
c\,[u]_{A_{1+q/p'}}^{1-\al/n} \|f\|_{L^{p}(u^{p/q})}.\ee
Assume now that $u\in A_1$. Then \eqref{weakest4} yields
\be \label{weakest5} \|I_\al f\|_{L^{q,\infty}(u)}\leq
c\,[u]_{A_{1}}^{1-\al/n} \|f\|_{L^{p}(u^{p/q})}.\ee
Since $\frac{p}{q}=1-\frac{p\al}{n}$,  this is equivalent to
\be \label{weakest6} \|I_\al
(u^{\frac{\al}{n}}f)\|_{L^{q,\infty}(u)}\leq c\,[u]_{A_1}^{1-\al/n}
\|f\|_{L^{p}(u)}.
\ee
We now prove that \eqref{weakest6} is sharp.  Let
$$u(x) = |x|^{\de-n}$$
with $0<\de<1$.  Then standard computations shows that
\be \label{I}
[u]_{A_1} \approx \frac{1}{\de }
\ee
Consider the function $f = \chi_{B}$, where $B$ is again the unit ball, we can compute its norm to be
\be \label{II}
\|f\|_{L^{p}(u)}= u(B)^{1/p}= c\left(\frac{1}{\de
}\right)^{1/p}.
\ee
Let $0<x_{\de}<1$ be a parameter whose value will be chosen soon.  We have
\bey \|I_\al
(u^{\al/n}f)\|_{L^{q,\infty}(u)} &\geq& \sup_{\lambda>0}\lambda
\left(u\{ |x|<x_{\de}: \int_B \frac{|y|^{(\de-1)\al/n}}{|x-y|^{1-\alpha/n}}dy>\lambda
\}\right)^{1/q} \\
&\geq& \sup_{\lambda>0}\lambda \left(u\{ |x|<x_{\de}: \int_{B\backslash B(0,|x|)} \frac{|y|^{(\de-1)\al/n}}{|x-y|^{1-\alpha/n}}dy>\lambda \}\right)^{1/q} \\
&\geq& \sup_{\lambda>0}\lambda \left(u\{ |x|<x_{\de}: \int_{B\backslash B(0,|x|)}
\frac{|y|^{(\de-1)\al/n}}{(2|y|)^{1-\alpha/n}}dy>\lambda \}\right)^{1/q} \\
&=&\sup_{\lambda>0}\lambda \left(u\{ |x|<x_{\de}:
\frac{c_{\al,n}}{\de}(1-|x|^{\de\al/n})
>\lambda \}\right)^{1/q} \\
&\geq& \frac{c_{\al,n}}{2\de} \left(u\{ |x|<x_{\de}:
\frac{c_{\al,n}}{\de}(1-|x|^{\de\al/n})
>\frac{c_{\al,n}}{2\de} \}\right)^{1/q}\\
&=&\frac{c_{\al,n}}{2\de} u( B(0,x_\delta))^{1/q}.
\eey
if $x_{\de}= (\frac12)^{n/\al\de}$.  It now follows that for $0<\de<1$,
\be \label{III}
\|I_\al
(u^{\al/n}f)\|_{L^{q,\infty}(u)} \geq \frac{c}{\de}
\left(\frac{x_{\de}^{\de}}{\de}\right)^{1/q}= c\frac{1}{\de}
\left(\frac{1}{\de}\right)^{1/q}.
\ee
Finally, combining \eqref{I},  \eqref{II},  \eqref{III}, and using that
$\frac{1}{q}-\frac{1}{p}=-\frac{\al}{n}$, we have that \eqref{weakest5} is
sharp.

\section{Proof of the Sobolev-type estimate} \label{application}

\begin{proof}[Proof of Theorem \ref{weakstrong}] Since $|f(x)|\leq c I_1(|\nabla f|)(x)$ we can use Theorem \ref{ShpWk} to obtain
\be \label{Wkest} \| f \|_{L^{q,\infty}(w^q)}\leq c
[w]_{A_{p,q}}^{1/n'}\|\nabla f w\|_{L^p}.\ee From this weak-type estimate we can pass to a strong one
with the  procedure that follows.  We use the so-called truncation method from \cite{LN}.

Given a non-negative function $g$ and
$\lambda>0$ we define its truncation about $\lambda$, $\tau_\lambda
g$, to be
$$\tau_\lambda g(x)=\min\{g,2\lambda\}-\min\{g, \lambda\}=\left\{\begin{array}{lc} 0 & g(x)\leq \lambda \\ g(x) -\lambda & \lambda < g(x) \leq 2\lambda \\ \lambda & g(x)> 2\lambda \end{array}\right. .$$
A well-know fact about Lipschitz functions is that they are
preserved by absolute values and truncations.  Define $\Omega_k=\{ x
: 2^k<|f(x)|\leq 2^{k+1}\}$ and let $u=w^q$. Then, \bey
\left(\intRn (|f(x)|w(x))^q \ dx \right)^{1/q}&\leq & \left(\sum_k \int_{\{2^{k+1}<|f(x)|\leq 2^{k+2}\}} |f(x)|^q u(x)\ dx\right)^{1/q} \\
&\leq & c\left(\sum_k 2^{kq} u(\Omega_{k+1})\right)^{1/q} \\
&\leq & c\left(\sum_k 2^{kp} u(\Omega_{k+1})^{p/q}\right)^{1/p}.
\eey Notice that if $x\in \Omega_{k+1}$, then
$\tau_{2^k}|f|(x)=2^k>2^{k-1}$ and hence
$$\Omega_{k+1}\subseteq \{x : \tau_{2^k}|f|(x)>2^{k-1}\}.$$
Furthermore, notice that $|\nabla \tau_{2^k}(|f|)|= |\nabla |f| |
\chi_{\Omega_{k}}\leq |\nabla f |\chi_{\Omega_{k}}$, a.e..
Continuing and using the weak-type estimate \eqref{Wkest} we have
 \bey
 \|f\|_{L^q(w^q)}
&\leq & c\left(\sum_k (2^k u(\{ x: \tau_{2^k}|f|(x)>2^{k-1}\})^{1/q})^p\right)^{1/p} \\
&\leq & c\,[w]_{A_{p,q}}^{1/n'} \left(\sum_k \int_{\Omega_k} (|\nabla\tau_{2^k}|f|(x)|w(x))^p \ dx\right)^{1/p}\\
&\leq & c\,[w]_{A_{p,q}}^{1/n'}\left(\intRn (|\nabla f(x)|w(x))^p \
dx\right)^{1/p},
\eey
since $p<q$ and  the sets $\Omega_k$ are disjoint. This
finishes the proof of the theorem.
 \end{proof}

\end{document}